\documentclass[a4paper,12pt]{amsart}
\usepackage[utf8]{inputenc}
\usepackage{fullpage}
\usepackage{bookmark}
\usepackage{graphicx}
\usepackage{amssymb,amsthm,amsmath,color}
\newtheorem{prop}{Proposition}[section]

\newtheorem{thm}{Theorem}
\newtheorem{lem}{Lemma}[section]
\newtheorem{claim}{Claim}[section]

\newtheorem{rem}{Remark}[section]

\newtheorem{definition}{Definition}[section]
\renewcommand{\Re}{\mathbb{R}}
\def\S{{\mathbb{S}}}
\def\BX{{\mathbf{B}[X]}}
\def\B{{\mathbf{B}}}
\def\BXX{{\mathbf{B}[X^\prime]}}
\def\DY{{\Delta Y}}

\def\DtoY{{\Delta\textrm{-to-}Y}}
\def\YtoD{{Y\textrm{-to-}\Delta}}

\newcommand{\st}{\;:\;}

\newcommand{\inter}{\mathrm{int}}
\newcommand{\bd}{\mathrm{bd}}

\newcommand{\noshow}[1]{}


\author{Sami Mezal Almohammad\address{Sami Mezal Almohammad:
Inst. of Math., Lor\'{a}nd E\"{o}tv\"{o}s Univ., Budapest,\,Hungary.\\
Dept. of Math., Faculty of Comp. Sci. and Math., Univ. of Thi-Qar, Thi-Qar, Iraq.
}
\email{sami85@cs.elte.hu, sami.mezal@utq.edu.iq}
\and
Zsolt L\'{a}ngi\address{Zsolt L\'{a}ngi:
MTA-BME Morphodynamics Research Group and Department of Geometry, Budapest
University of Technology, Budapest, Hungary
}\email{zlangi@math.bme.hu}
\and
M\'arton Nasz\'odi\address{M\'arton Nasz\'odi: Alfr\'ed R\'enyi Inst. of Math.;
MTA-ELTE Lend\"ulet Combinatorial Geometry Research Group;
Dept. of Geometry, Lor\'and E\"otv\"os University, Budapest}
\email{marton.naszodi@math.elte.hu}
}

\title{An analogue of a theorem of Steinitz for ball polyhedra in $\Re^3$}

\begin{document}

\begin{abstract}
Steinitz's theorem states that a graph $G$ is the edge-graph of a $3$-dimensional
convex polyhedron if and only if, $G$ is simple, plane and $3$-connected. We prove
an analogue of this theorem for ball polyhedra, that is, for intersections of finitely many unit
balls in $\mathbb{R}^3$.
\end{abstract}

\keywords{Steinitz's theorem, polyhedron, ball polyhedron, edge-graph.}
\subjclass[2010]{52B10, 52A30, 52B05}
\maketitle

\section{Introduction}
Our work takes place in Euclidean $3$-space. For the closed ball of radius $\rho$
centered at $x\in \Re^3$, we use the notation $\B[x,
\rho]:=\{y\in\Re^3\st d(x,y)\leq \rho\}$. The 2-dimensional sphere (the
boundary of a closed ball) is denoted by $\S^2(x, \rho):=\{y\in\Re^3\st
d(x,y)=\rho\}$. For brevity, we set $\B[x]:=\B[x, 1]$, $\S(x):=\S^{2}(x, 1)$ and
for a set $X\subseteq\mathbb{R}^3$, we write $\BX:=\bigcap\limits_{x\in X}\B[x]$.

Let $X\subset\mathbb{R}^3$ be a finite, nonempty set contained in a ball of
radius less than 1. The set $P=\BX$ is called a \emph{ball polyhedron}. For any
$x\in X$, we call $\B[x]$ a \emph{generating ball} of $P$ and $\S(x)$ a
\emph{generating sphere} of $P$. Unless we state otherwise, we will assume that
$X$ is a \emph{reduced set of centers}, that is, that
$\BX\neq\mathbf{B}[X\setminus\{x\}]$ for any $x\in X$.

The face structure of a $3$-dimensional ball polyhedron $\BX$ are defined in a natural way: a point on the boundary of $\BX$ belonging to at least three generating spheres is called a \emph{vertex}; a connected component of the intersection of two generating spheres and $\BX$ is called an \emph{edge}, if it is a non-degenerate circular arc; and the intersection of a generating sphere and $\BX$ is called a \emph{face}.

The face structure of a ball polyhedron, unlike that of a convex polyhedron, is
not necessarily an algebraic lattice, with respect to containment, see
\cite{BN06}. Following \cite{BLNP07}, we call a ball polyhedron in
$\mathbb{R}^3$ \emph{standard}\label{page:standard}, if its vertex-edge-face
structure is a lattice with respect to containment. This is the case if, and
only if, the intersection of any two faces is either empty, or one vertex or one
edge, and any two edges share at most one vertex. The paper \cite{KMP10} and Chapter~6 of the beautiful book \cite{MMO19} by Martini, Montejano and Oliveros provide further background on the theory of ball polyhedra.

A fundamental result of Steinitz (see, \cite{Ziegler95}, \cite{Steinitz34} and
\cite{Steinitz22}) states that a graph $G$ is the edge-graph of a 3-dimensional
convex polyhedron if and only if, $G$ is simple (ie., it contains no loops and
no parallel edges), plane and \emph{$3$-connected} (ie., removing any two
vertices and the edges adjacent to them yields a connected graph). In
\cite{BLNP07}, it is shown that the edge-graph of any standard ball polyhedron
in $\mathbb{R}^3$ is simple, plane and $3$-connected. Solving an open problem
posed in \cite{BLNP07} and \cite{Bezdek13}, our main result shows that the
converse holds as well.

\begin{thm}\label{thm:mainresult}
Every $3$-connected, simple plane graph is the edge-graph of a standard
ball polyhedron in $\mathbb{R}^3$.
\end{thm}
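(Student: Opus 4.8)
The plan is to prove this as a ball-polyhedral refinement of Steinitz's theorem by an \emph{inductive construction}, in which the induction follows a combinatorial decomposition of $3$-connected plane graphs into elementary pieces, and each combinatorial step is matched by a small geometric surgery on the system of generating spheres. A direct imitation of the classical proof — take, by Steinitz's theorem, a convex polytope $P$ with edge-graph $G$, and replace each halfspace bounding a facet of a small copy of $P$ by a unit ball tangent to that facet from the correct side — does \emph{not} work here: such a ball has its centre within distance $1$ of the facet along the outer normal, and since the facet normals of a polytope can point in almost opposite directions, the resulting centre set need not lie in a ball of radius less than $1$; in fact one checks there is no placement of the centres ``near the outer normals'' obeying the radius constraint. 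So the body has to be assembled locally, starting from a simple seed.

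\textbf{Reduction and base case.} Let $\mathcal R$ be the class of graphs realizable as edge-graphs of standard ball polyhedra in $\Re^3$. I would reduce Theorem~\ref{thm:mainresult} to showing that $\mathcal R$ contains $K_4$ and is closed under a family of elementary moves — vertex splittings, edge insertions across a face, and $\DtoY$ / $\YtoD$ exchanges (on a triangular face, respectively a degree-$3$ vertex) — known, from the combinatorial theory underlying Steinitz's theorem, to generate all $3$-connected plane graphs from $K_4$, each move staying within that class. For the base case, if $X_0$ is the vertex set of a sufficiently small regular tetrahedron then $\BX$ (with $X=X_0$) is a ball polyhedron — a Reuleaux simplex — with four faces, six edges and four vertices; each face is an intersection of three spherical caps, hence connected, each edge an arc, and the symmetry of $X_0$ shows that any two faces meet in a single edge and any two edges in at most one vertex, so $\BX$ is standard with edge-graph $K_4$.

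\textbf{Closure under the elementary moves.} This is the core. Given a standard ball polyhedron $P=\BX$ with edge-graph $G$ and a move taking $G$ to $G'$, I would produce $X'$ by a modification confined to a small ball. For a $\YtoD$ exchange at a vertex $v$ of degree $3$, where exactly three generating spheres meet: pick $\varrho>0$ so small that $\mathbf B[v,\varrho]$ meets $\partial P$ only in parts of the three faces and three edges at $v$; inside it, adjoin one new generating sphere placed tangent from the inside to a tiny cap near $v$, and perturb the three old centres by $O(\varrho)$, so that the new $\BXX$ coincides with $P$ outside $\mathbf B[v,\varrho]$ while inside it exhibits the face pattern of a truncated vertex. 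The inverse move $\DtoY$ on a triangular face $F$ is the same surgery reversed: delete the generating sphere of $F$ and re-seat its three neighbours so that $F$ collapses to a point. An edge insertion across a face $F$ (a patch of $\mathbf S(x_0)$) is realized by splitting $x_0$ into two nearly-coincident centres $x_0\pm\eta w$, whose common circle lies in the plane through $x_0$ orthogonal to $w$ and cuts $F$ along the prescribed arc; vertex splittings are realized by an analogous local deformation near the vertex. In every case the new centres differ from the old ones, and from $v$ or from a point of $F$, by $O(\varrho)$ or $O(\eta)$, so for the parameter small enough $X'$ still lies in a ball of radius less than $1$; standardness is inherited from $P$ away from the surgery region and checked directly inside it, where the local picture is combinatorially that of a convex polytope.

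The main obstacle, I expect, is making the surgeries of the previous paragraph rigorous: proving that each one changes the face lattice by \emph{exactly} the prescribed step and creates no spurious vertex, edge or face; that the three (or more) spheres meeting near $v$ after a $\YtoD$ move intersect precisely in the pattern of a truncated vertex — which may force, e.g., a controlled concurrency of four spheres through a point — and that the global lattice property survives. This needs quantitative control of how a face of a ball polyhedron, together with its incident edges and vertices, deforms under an $O(\varrho)$ perturbation of the corresponding centre — a more delicate analogue of the elementary fact that a small perturbation of the facet hyperplanes of a polytope preserves its combinatorial type — plus the bookkeeping that the supports of the successive moves can be chosen pairwise disjoint, so that the induction closes.
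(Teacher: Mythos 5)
Your overall skeleton (induct along a combinatorial generation of $3$-connected plane graphs from $K_4$, and realize each combinatorial move by a local modification of the set of generating balls) is indeed the strategy of the paper, and your base case (a ball tetrahedron over a small regular simplex) is fine. But the entire content of the theorem lies in the geometric closure step, and there your proposed surgeries do not deliver the exact combinatorial changes you claim — you yourself defer this as ``the main obstacle'' — and at least two of them fail as stated. First, cutting off a degree-$3$ vertex $v$ with one new generating ball realizes the \emph{truncation} of $v$ (a small triangular face with three new vertices, each still joined by an edge toward the corresponding old neighbour $v_i$), not the $\YtoD$ exchange, which must delete $v$ and its three edges and make the new triangle's vertices be $v_1,v_2,v_3$ themselves; to get that you would have to force the new unit sphere to pass exactly through $v_1,v_2,v_3$ (so four generating spheres become concurrent at each $v_i$) and to swallow the three old edges entirely — precisely the ``controlled concurrency'' you name but never establish, and where all the difficulty sits. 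Second, your edge-insertion move, replacing a centre $x_0$ by $x_0\pm\eta w$, places the new edge on the circle $\{p:\langle p-x_0,w\rangle=0,\ |p-x_0|=\sqrt{1-\eta^2}\}$ of radius strictly less than $1$, whereas every existing vertex of the face supported by $\S(x_0)$ lies at distance exactly $1$ from $x_0$; hence this surgery can only create an edge between two \emph{new} vertices interior to old edges and can never insert an edge joining two existing vertices of the face, which is one of the cases you must handle (it is exactly how internal edges lost in SP-reductions get restored). Third, ``vertex splittings are realized by an analogous local deformation'' is not an argument, and the generating theorem you invoke (vertex splittings, edge insertions, $\DtoY$/$\YtoD$ exchanges) is stated too vaguely to pin down which geometric moves are actually required.

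For comparison, the paper needs only one geometric direction and gets the other for free. Its combinatorial input is that simple $\Delta Y$ reductions take any $3$-connected plane graph to $K_4$ (Proposition~\ref{prop:zieglercorollary}). Geometrically it proves only Theorem~\ref{thm:maingeometry}: given a realization of the graph containing the triangle, \emph{remove} the ball generating the triangular face (Lemma~\ref{lem:edgegraphofnewballpolyhedron} shows this performs a simple $\DtoY$ reduction, via the analysis of $\B[x_1]\cap\B[x_2]\cap\B[x_3]$), and then restore any edges and subdividing vertices lost to SP-reductions by adjoining a slightly \emph{rotated copy of an existing generating ball about an axis through the prescribed points} (Lemma~\ref{lem:addingextraballs}) — the rotation axis is what guarantees the new sphere passes exactly through the required vertices or edge points, the incidence your $x_0\pm\eta w$ trick cannot achieve. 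The opposite direction of the induction (where a triangle must be created) is never done by surgery at all: it is transferred to the dual standard ball polyhedron via Lemma~\ref{lem:bezdektheorem} and Proposition~\ref{prop:dyduality}, where it again becomes the ``remove a ball'' step. Until you either prove your truncation/concurrency and vertex-splitting surgeries rigorously or route around them (e.g., by duality as the paper does), the proposal remains a plan with the decisive steps missing.
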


The proof of Steinitz's theorem consists of two parts. First, it is shown that
$3$-connected, simple plane graphs can be ``reduced'' by a finite sequence of
certain graph operations to the complete graph $K_4$ on four vertices. Second,
in the geometric part, it is shown that if a graph $G$ is obtained from another
graph $G^\prime$ by such an operation and $G$ is realizable as the edge-graph of
a polyhedron, then $G^\prime$ is realizable as well. To prove
Theorem~\ref{thm:mainresult}, we use the first, combinatorial part without modification. Our
contribution is the proof of the second, geometric part in the setting of ball polyhedra.

The structure of the paper is the following. First, in Section~\ref{sec:prelim},
we introduce these operations on graphs, and recall facts on the face structure
of the dual of a ball polyhedron.
In Section~\ref{sec:proofmain}, we state our main contribution, Theorem~\ref{thm:maingeometry}, which shows the ``backward
inheritance'' of realizability by ball polyhedra under these graph operations, and deduce Theorem~\ref{thm:mainresult} from it. Finally, in Section~\ref{sec:proofgeo}, we prove Theorem~\ref{thm:maingeometry}.

\section{Preliminaries}\label{sec:prelim}

\subsection{Simple \texorpdfstring{$\DtoY$}{Delta-Y} and
\texorpdfstring{$\YtoD$}{Y-Delta} reductions on a plane graph}

Let $G$ be a $3$-connected plane graph and $K_3$ be a triangular face with vertices $v_1, v_2$ and $v_3$ (resp.,
$K_{1,3}$ be a subgraph consisting of a $3$-valent vertex $v$ of $G$, its neighbors $v_1,v_2, v_3$, and the edges $\{ v, v_i \}$ connecting $v$ to its neighbors). A \emph{$\DY$
operation} is defined as the graph operation which removes the edges $\{v_i,v_j \}$ of a triangular face $K_3$, adds a new vertex $v$ from the face, and connects it to $v_i$s, or vice versa, it takes a subgraph $K_{1,3}$ of $G$, removes the vertex $v$ and the edges incident to it, then connects all pairs $v-I, v_j$ by an edge. To specify the direction of the transformation, we will distinguish between a
$\DtoY$ transformation and a $\YtoD$ transformation (see Figure~\ref{fig:fig2}).

\begin{figure}[ht]
    \centering
 \includegraphics[width=7cm,height=3.5cm]{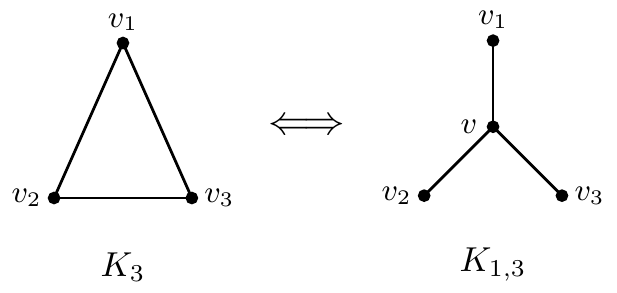}
    \caption{$\Longrightarrow$: A $\DtoY$ transformation; $\Longleftarrow$: A $\YtoD$ transformation}
    \label{fig:fig2}
\end{figure}

A $\DY$ operation may create multiple edges or vertices of degree two. A graph with such objects
is clearly not the edge-graph of a standard ball polyhedron. To fix these issues, we
define the following notion. A \emph{series-parallel reduction}, or
\emph{SP-reduction} is the replacement of a pair of edges incident to a vertex
of degree 2 with a single edge or, the replacement of a pair of parallel edges
with a single edge that connects their common endpoints, see
Figure~\ref{fig:fig1}.

\begin{figure}[ht]
    \centering
 \includegraphics[width=10cm,height=2cm]{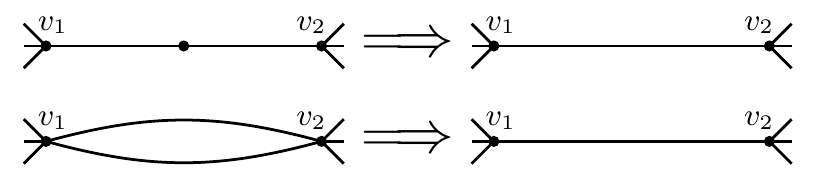}
    \caption{Examples of SP-reductions}
    \label{fig:fig1}
\end{figure}

Assume that a graph $G$ contains $K_{1,3}$ as a subgraph whose degree 3 vertex is denoted by $v$, and its neighbors are
$v_1, v_2, v_3$ (resp., $K_3$ with vertices
$v_1, v_2, v_3$), see Figure~\ref{fig:fig2}.
We call edges of $G$ that connect two vertices of $K_{1,3}$ (resp., $K_3$) \emph{internal edges}. We define
the \emph{outer degree} of a neighbor of $v$ (resp., a vertex of $K_3$),
as the number of non-internal edges adjacent to it. A $K_{1,3}$ is called $Y_0$, $Y_1$, $Y_2$,
or $Y_3$ if it has zero, one, two, or three internal edges respectively. A $K_3$ is called $\Delta_0$, $\Delta_1$,
$\Delta_2$, or $\Delta_3$ if it has zero, one, two, or three vertices of outer
degree one, respectively.

 A \emph{simple} $\DY$ reduction means any $\DY$ operation followed immediately
by SP-reductions that are then possible. There are four different types of
simple $\DtoY$ and $\YtoD$ reductions (cf. Corollary 4.7 of \cite{Ziegler95}), as shown in Figure~\ref{fig:fig3}.

\begin{prop}\label{prop:zieglercorollary}
Every $3$-connected plane graph $G$ can be reduced to $K_4$ by a sequence of
simple $\DY$ reductions.
\end{prop}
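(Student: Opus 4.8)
The plan is to prove this by induction on the number of edges of $G$; it is a purely graph-theoretic statement --- essentially Corollary~4.7 of \cite{Ziegler95}, going back to Steinitz \cite{Steinitz34, Steinitz22} --- and since we use it verbatim we only outline the argument. The base case is $G = K_4$, which has $6$ edges. For the inductive step I would show that every $3$-connected plane graph $G$ with more than $6$ edges admits a sequence of simple $\DY$ reductions terminating at a $3$-connected plane graph with strictly fewer edges; the induction hypothesis then completes the proof. Planar duality halves the casework: a $\DtoY$ reduction of $G$ corresponds to a $\YtoD$ reduction of the planar dual $G^{*}$, which is again simple, plane and $3$-connected, so it suffices to treat triangular faces and their $\DtoY$ reductions together with, dually, $3$-valent vertices and their $\YtoD$ reductions.

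First I would locate the feature to operate on. By Euler's formula a $3$-connected plane graph cannot have all faces of size at least $4$ and all vertices of degree at least $4$ simultaneously --- this would give $4f \le 2e$ and $4n \le 2e$, hence $e + 2 = n + f \le e$ --- so $G$ has a triangular face $K_3$ or a $3$-valent vertex, and by duality I may assume the former. I would then split according to whether $K_3$ is $\Delta_0$, $\Delta_1$, $\Delta_2$ or $\Delta_3$. In a $\Delta_3$ face each vertex has a single non-internal edge, and $3$-connectivity forces all three of these to end at one common vertex with no room for further vertices, so $G = K_4$; this case is excluded. If $K_3$ is $\Delta_1$ or $\Delta_2$, I would perform the corresponding simple $\DtoY$ reduction (one of the types shown in Figure~\ref{fig:fig3}): the $\DtoY$ operation together with the one or two SP-reductions forced at the outer-degree-$1$ vertices strictly lowers the edge count, and one then checks that simplicity, planarity and $3$-connectivity persist --- a hypothetical $2$-cut of the reduced graph would pull back to a cut of $G$ of a restricted form, contradicting its $3$-connectivity.

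The main obstacle is the remaining case, in which every triangular face of $G$ is $\Delta_0$ and, dually, every $3$-valent vertex is $Y_0$. Here a single simple $\DY$ reduction need not decrease the number of edges: the octahedron is the prototype, since all of its faces are $\Delta_0$ and a $\DtoY$ operation on one of them creates neither a parallel edge nor a vertex of degree $2$, so no SP-reduction fires. The way out is to show that such a $\DtoY$ operation can be followed by a simple $\YtoD$ reduction that does lower the edge count --- for the octahedron the $\DtoY$ produces a $3$-valent vertex that is $Y_1$, and a $\YtoD$ there removes a parallel edge --- so the two reductions together reach a $3$-connected plane graph with fewer edges than $G$, to which the induction hypothesis applies. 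Making this precise in every configuration --- guaranteeing the second, edge-decreasing reduction exists, checking that the SP-reductions fire exactly as recorded by the $Y_i$ and $\Delta_i$ classification, and confirming that simplicity and $3$-connectivity survive each operation --- is the heart of the classical combinatorial half of Steinitz's theorem, and for these verifications I refer to \cite{Ziegler95}.
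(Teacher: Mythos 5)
You are reviewing a statement that the paper itself does not prove: Proposition~\ref{prop:zieglercorollary} is exactly Corollary~4.7 of \cite{Ziegler95}, and the authors say explicitly that they use the combinatorial half of Steinitz's theorem without modification, so a citation is all that is required here. Your decision to sketch an inductive argument is therefore optional, but the sketch as written contains a false step. In the $\Delta_3$ case you claim that $3$-connectivity forces the three outer edges to end at one common vertex, so that $G=K_4$. The triangular prism refutes this: it is a simple, plane, $3$-connected graph with nine edges, each of its two triangular faces is a $\Delta_3$ (every triangle vertex has outer degree one, the three outer edges going to three distinct vertices), and it is not $K_4$. The $\Delta_3$ configuration is not an excluded case but one of the four types of simple $\DtoY$ reduction in Figure~\ref{fig:fig3}; it amounts to contracting the triangle, removes three edges, and applied to the prism it yields $K_4$ directly. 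So this case must be handled by performing the reduction (and checking $3$-connectivity of the result), not by asserting $G=K_4$.

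The more serious gap is the hinge of your induction, the case in which every triangle is $\Delta_0$ and every $3$-valent vertex is $Y_0$. There you assert that a $\DtoY$ operation can always be followed by a simple $\YtoD$ reduction that strictly lowers the edge count, verify this only for the octahedron, and refer ``these verifications'' to \cite{Ziegler95}. But that is not what the cited source proves: Ziegler obtains Corollary~4.7 from Truemper's theorem that every planar graph is $\DY$-reducible, proved via grid minors and the inheritance of reducibility under minors, not by an edge-count induction in which every one or two reductions are edge-decreasing. After a $\DtoY$ on a $\Delta_0$ face the new vertex is again a $Y_0$, and if the old triangle vertices have large degree (or degree four with pairwise non-adjacent outer neighbors) no edge-decreasing $\YtoD$ presents itself, so the existence of your ``second, compensating'' reduction is precisely the hard content and is nowhere established by the reference you lean on; the reduction sequences produced by the known proofs are not monotone in the number of edges. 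As it stands, the deferral does not patch your own scheme. Either prove the monotone two-step lemma you are invoking, or do what the paper does and quote Corollary~4.7 of \cite{Ziegler95} outright (in the form that all intermediate graphs remain simple, plane and $3$-connected, which is what the application to Theorem~\ref{thm:maingeometry} requires).
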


\begin{figure}[ht]
    \centering
 \includegraphics[width=13cm,height=10cm]{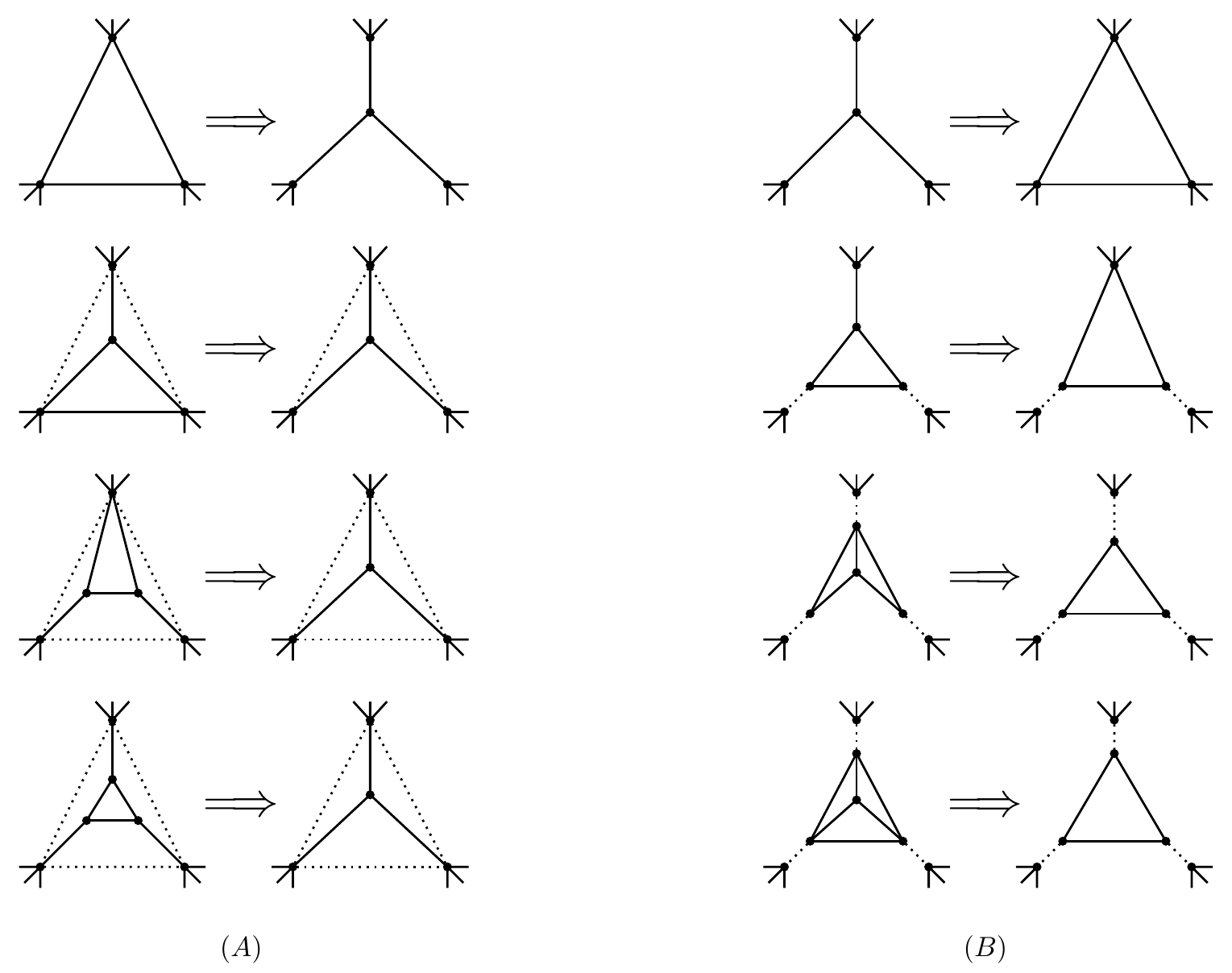}
    \caption{($A$) Four types of simple $\DtoY$ reduction, and ($B$) four types of simple $\YtoD$ reduction, where the dotted lines denote edges that may or may not be present, and are not affected by the simple $\DtoY$ and $\YtoD$ reductions.}
    \label{fig:fig3}
\end{figure}

\subsection{Standard graphs}

A planar graph with a fixed drawing on the plane is called a \emph{plane graph}. It is well known that $3$-connected planar graphs have only one drawing, that is, all plane drawings of such a graph have isomorphic face lattices \cite[Section~4.1]{Ziegler95}.

\begin{definition}
Let $G$ be a plane graph. We call $G$ \emph{standard}, if
\begin{itemize}
\item[(i)] the intersection of any two faces is either empty, or one vertex or one edge, and
\item[(ii)] any two edges share at most one vertex.
\end{itemize}
\end{definition}
\begin{rem}\label{rem:standard}
Let $G$ be the edge-graph of a ball polyhedron. Then $G$ is standard if and only
if the ball polyhedron is a standard ball polyhedron.
\end{rem}

We leave the proof of the following two lemmas to the reader as an exercise.

\begin{lem}\label{lem:dystandardgraph} Let $G$ be a $3$-connected plane graph and let the graph
$G_0$ be derived from $G$ by a simple $\DtoY$ reduction. If $G$ is a standard
graph, then so is $G_0$.
\end{lem}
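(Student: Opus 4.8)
\emph{Sketch of proof.} The plan is to track precisely how the faces and edges of $G_0$ are produced from those of $G$, and then to read off conditions (i) and (ii) for $G_0$ directly from the same conditions for $G$. (Alternatively, one can simply verify the four types of simple $\DtoY$ reduction in Figure~\ref{fig:fig3}(A) one by one; the argument below treats all of them at once.)

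First I would fix notation: let $v_1,v_2,v_3$ be the vertices of the triangular face $K_3$ used by the reduction, and for each $\{i,j\}\subset\{1,2,3\}$ let $f_{ij}$ denote the face of $G$ other than $K_3$ incident to the edge $\{v_i,v_j\}$. Condition (i) for $G$, applied to $K_3$, already forces $K_3,f_{12},f_{13},f_{23}$ to be four pairwise distinct faces (if, say, $f_{12}=f_{13}$, that face would meet $K_3$ in the two edges $\{v_1,v_2\}$ and $\{v_1,v_3\}$). The pure $\DtoY$ operation then deletes the three edges $\{v_i,v_j\}$, inserts the vertex $v$ together with the edges $\{v,v_i\}$, and replaces these four faces by three faces $g_{12},g_{13},g_{23}$, where $\partial g_{ij}$ is obtained from $\partial f_{ij}$ by substituting the path $v_i,v,v_j$ for the edge $\{v_i,v_j\}$; all remaining faces are untouched. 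The subsequent SP-reductions act only on those $v_i$ that have outer degree $1$ (these have degree $2$ immediately after the operation): such a $v_i$, with outer neighbour $w_i$, is suppressed and its two incident edges $\{v,v_i\}$ and $\{v_i,w_i\}$ are merged into the single edge $\{v,w_i\}$. Since an SP-reduction neither identifies two faces nor creates a new vertex–edge or face–face incidence, it will be enough to check (i) and (ii) for the graph right after the $\DtoY$ step, keeping track of these suppressions.

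For (ii) I would argue that $G_0$ has no parallel edges. Its edges are the old edges of $G$ (minus the three triangle edges and the edges $\{v_i,w_i\}$ that were merged away), together with the edges emanating from $v$. No two old edges are parallel since $G$ is simple; no old edge is parallel to a new one since $v\notin V(G)$; and two edges at $v$ are parallel only if they have the same second endpoint, which — because the outer neighbour $w_i$ of a $v_i$ is never one of the triangle vertices — can only happen if $w_i=w_j$ for two triangle vertices of degree $3$. But then $\{v_k,w_i\}$ (with $\{i,j,k\}=\{1,2,3\}$) would be a $2$-element cut separating $\{v_i,v_j\}$ from the rest of $G$, contradicting $3$-connectivity; here one uses that $G$ has more than four vertices, as is apparent from Figure~\ref{fig:fig3}(A).

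For (i) I would go through the pairs of faces of $G_0$. Two unchanged faces meet in $G_0$ exactly as they did in $G$, so there is nothing to check. For an unchanged face $h$ and a new face $g_{ij}$ one checks that $g_{ij}\cap h=f_{ij}\cap h$: the edges and vertices of $g_{ij}$ not inherited from $\partial f_{ij}$ are the new ones at $v$ (and the edge $\{v,w_i\}$, when present), which do not lie on $h$ since $v\notin h$, and if $v_i$ were suppressed its only incident faces $K_3,f_{ij},f_{ik}$ are not among the unchanged ones; so $g_{ij}\cap h$ equals $f_{ij}\cap h$, which is at most a vertex or an edge by (i) for $G$. The crucial case is the pair $g_{ij},g_{ik}$, which share the vertex $v_i$ of $G$: they always share the edge at $v$ joining $v$ to $v_i$ (or, if $v_i$ is suppressed, to $w_i$), and any further intersection corresponds to $f_{ij}\cap f_{ik}$, which by (i) for $G$ is either the single vertex $v_i$ or a single edge. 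Inspecting the cyclic order of edges around $v_i$, one sees that $f_{ij}\cap f_{ik}$ is an edge precisely when $\deg_G v_i=3$, i.e.\ exactly when $v_i$ is suppressed and that edge is $\{v_i,w_i\}$ — which the SP-reduction then merges with $\{v,v_i\}$ into the single edge $\{v,w_i\}$; otherwise $v_i$ survives and $g_{ij}\cap g_{ik}$ is the single edge $\{v,v_i\}$. In every case the intersection is exactly one edge, so (i) holds for $G_0$. The only genuinely delicate point is this reconciliation of the local rotation at $v_i$ with the induced SP-reduction; once that is settled, everything else is a direct transcription of the hypotheses on $G$.
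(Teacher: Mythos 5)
The paper gives no proof of Lemma~\ref{lem:dystandardgraph} at all --- it is explicitly left to the reader as an exercise --- so there is nothing of the authors' to compare your argument with; judged on its own merits, your verification is essentially correct and complete in the generic case. The face bookkeeping (the four faces $K_3,f_{12},f_{13},f_{23}$ becoming the three faces $g_{ij}$ whose boundaries replace $\{v_i,v_j\}$ by the path $v_i,v,v_j$), the observation that a suppressed triangle vertex $v_i$ is incident only to $K_3,f_{ij},f_{ik}$ so that unchanged faces are unaffected, and the rotation argument at $v_i$ showing that $f_{ij}\cap f_{ik}$ is an edge precisely when $\deg_G v_i=3$, in which case that edge is $\{v_i,w_i\}$ and is absorbed into $\{v,w_i\}$ by the SP-reduction, together yield conditions (i) and (ii) for $G_0$.

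The one genuine loose end is your appeal to Figure~\ref{fig:fig3} for the claim that $G$ has more than four vertices. The figure merely illustrates the four reduction types; it is not a hypothesis of the lemma, which as stated admits $G=K_4$, and in that case two (in fact all three) outer neighbours do coincide and parallel edges really appear after the suppressions. This does not endanger the statement, and your own argument already contains the fix: if $w_i=w_j$ with $\deg_G v_i=\deg_G v_j=3$, then removing $v_k$ and $w_i$ isolates $\{v_i,v_j\}$, so by $3$-connectivity $G$ has no vertex outside $\{v_1,v_2,v_3,w_i\}$, i.e.\ $G=K_4$; and then the simple $\DtoY$ reduction --- whose SP-steps by definition also replace parallel edges by single edges --- produces the one-edge graph, for which conditions (i) and (ii) hold vacuously. (The same dichotomy disposes of the potential extra shared vertex $w_j=w_k$ in the intersection $g_{ij}\cap g_{ik}$, since $w_j\in\partial f_{ij}$ and $w_k\in\partial f_{ik}$, so equality would force $f_{ij}\cap f_{ik}\supseteq\{v_i,w_j\}$ and hence again $G=K_4$.) Replace the reference to the figure by this short observation and your proof is complete.
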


The \emph{subdivision} of an edge
$\{t_1, t_2\}$ of a graph $G$ is another graph obtained from $G$ by removing
the edge $\{t_1, t_2\}$, then adding a new vertex $t^\prime$ and, finally, adding the edges
$\{t_1, t^\prime\}$ and $\{t^\prime, t_2\}$.

\begin{lem}\label{lem:standardgraphs}
Let $G$ be a standard graph, $E$ be a face of $G$ and $\{u_1, u_3\}$, $\{u_3, u_2\}$ be two edges of $E$ such that $u_1$ and $u_2$ are non-adjacent vertices.
\begin{enumerate}
    \item[\emph{I.}] If the graph $H$ is obtained from $G$ by adding the edge $\{u_1, u_2\}$, then $H$ is a standard graph.
    \item[\emph{II.}] If the graph $H^\prime$ is obtained from $G$ by adding the edge $\{u_2, u^\prime\}$ where $u^\prime$ is a new vertex subdividing the edge $\{u_1, u_3\}$, then $H^\prime$ is a standard graph.
    \item[\emph{III.}] If the graph $H^{\prime\prime}$ is obtained from $G$ by adding the edge $\{u^\prime, u^{\prime\prime}\}$ where $u^\prime$ and $u^{\prime\prime}$ are two new vertices subdividing the edges $\{u_1, u_3\}$ and $\{u_3, u_2\}$ respectively, then $H^{\prime\prime}$ is a standard graph.
\end{enumerate}
\end{lem}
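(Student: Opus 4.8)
The plan is to treat the three statements uniformly. In each of~I, II, and~III the newly added edge --- the chord $\{u_1,u_2\}$, the chord $\{u_2,u'\}$, or the chord $\{u',u''\}$ --- is drawn in the interior of the face $E$, and in~II and~III one or two edges of $E$ are first subdivided. Hence the only faces of $G$ that are affected are $E$ itself and the one or two faces lying on the far side of the edges $\{u_1,u_3\}$ and $\{u_3,u_2\}$, and checking conditions~(i) and~(ii) of the definition of a standard graph reduces to inspecting the finitely many pairs of faces (respectively, pairs of edges) in which a new edge or a new vertex participates. Throughout I would use that every face of $G$ is bounded by a cycle and every edge of $G$ borders exactly two distinct faces, which is the case for the graphs under consideration.

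Before starting the case analysis I would record the relevant properties of $G$. Let $F_0\neq E$ be the face bordering $\{u_1,u_3\}$ and $F_1\neq E$ the face bordering $\{u_3,u_2\}$. Since $G$ is standard, $F_0$ and $E$ meet in a single edge, which must be the closed edge $\{u_1,u_3\}$; in particular $u_2$ is not a vertex of $F_0$, and symmetrically $u_1$ is not a vertex of $F_1$. Combining this with the hypothesis that $u_1$ and $u_2$ are non-adjacent, one also gets $F_0\neq F_1$, since otherwise this common face would meet $E$ along the two-edge path $u_1u_3u_2$, or along one edge together with the extra vertex $u_2$, contradicting standardness. Likewise, no face of $G$ other than $E$ and $F_0$ contains two of the vertices $u_1,u_2,u_3$ --- again because $u_1,u_2$ are non-adjacent and the edges $\{u_1,u_3\}$, $\{u_3,u_2\}$ border $F_0$, $F_1$. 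Finally, after subdividing $\{u_1,u_3\}$ by a new vertex $u'$, the only faces through $u'$ are the modifications of $E$ and $F_0$, and symmetrically for $u''$ and $F_1$.

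For~I, adding the chord $\{u_1,u_2\}$ splits $E$ into the triangle $T=u_1u_3u_2$ and a face $E^{\ast}$ whose boundary cycle is that of $E$ with the path $u_1u_3u_2$ replaced by the single edge $\{u_1,u_2\}$; every other face is unchanged. Condition~(ii) is immediate, since $\{u_1,u_2\}$ is parallel to no edge of $G$ and so shares at most one vertex with any other edge. For~(i) one checks that $T$ and $E^{\ast}$ meet exactly along $\{u_1,u_2\}$; that $T$ meets $F_0$ (resp. $F_1$) exactly along $\{u_1,u_3\}$ (resp. $\{u_3,u_2\}$); that $E^{\ast}$ meets $F_0$ (resp. $F_1$) in the single vertex $u_1$ (resp. $u_2$); and that each of $T$ and $E^{\ast}$ meets every remaining face $F$ in at most the single cell along which $E$ met $F$ --- here using that such an $F$ contains at most one of $u_1,u_2,u_3$. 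Parts~II and~III are the same computation with extra bookkeeping for the subdivided edges: in~II the edge $\{u_1,u_3\}$ becomes the path $u_1u'u_3$, so $F_0$ is replaced by the face $F_0'$ with that edge subdivided, while the chord $\{u_2,u'\}$ splits $E$ into the triangle $T'=u'u_3u_2$ and a further face $E^{\ast\ast}$; one then verifies that $F_0'$ and $E^{\ast\ast}$, $F_0'$ and $T'$, $T'$ and $E^{\ast\ast}$, and $T'$ and $F_1$ meet along the single edges $\{u_1,u'\}$, $\{u',u_3\}$, $\{u',u_2\}$, $\{u_3,u_2\}$ respectively, that all intersections with unaffected faces are as before, and that the three new edges $\{u_1,u'\}$, $\{u',u_3\}$, $\{u_2,u'\}$ pairwise meet only in $u'$. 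Part~III is identical, now with both $\{u_1,u_3\}$ and $\{u_3,u_2\}$ subdivided (by $u'$ and $u''$), with $F_0$ and $F_1$ replaced by their subdivisions, with $E$ split into the triangle $u'u_3u''$ and one further face, and with $F_0\neq F_1$ guaranteeing that the two subdivisions do not interfere.

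The step I expect to require the most care is precisely this last round of checks: ensuring that the new triangle and the possibly subdivided faces $F_0$ and $F_1$ never meet some face in two edges, in a two-edge path, or in an edge together with a stray vertex. This is exactly where one uses that $G$ was standard and that $u_1$ and $u_2$ are non-adjacent, and where one must keep track, face by face, of which of $u_1,u_2,u_3$ a given face of $G$ is allowed to contain.
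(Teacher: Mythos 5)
The paper gives no proof of this lemma (it is explicitly left to the reader), so there is no argument of the authors to compare yours against; judged on its own, your face-by-face and edge-by-edge verification is the intended routine argument and is essentially correct. Two small points deserve attention. First, your auxiliary claim that ``no face of $G$ other than $E$ and $F_0$ contains two of the vertices $u_1,u_2,u_3$'' should read ``other than $E$, $F_0$ and $F_1$'', since $F_1$ contains $u_2$ and $u_3$; the statement you actually use later --- that a face distinct from $E$, $F_0$, $F_1$ contains at most one of $u_1,u_2,u_3$ --- is the correct one, and it does follow from standardness as you indicate: a face containing $u_1$ and $u_3$ would meet $E$ in at least two vertices, hence by conditions (i) and (ii) in the edge $\{u_1,u_3\}$, forcing it to be $E$ or $F_0$ (similarly for $u_2,u_3$ and $F_1$), while a face containing $u_1$ and $u_2$ would force these vertices to be adjacent. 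Second, your standing assumption that every face of $G$ is bounded by a cycle and every edge borders exactly two faces is genuinely needed and not part of the definition of a standard graph: for a degenerate standard graph such as the path $u_1u_3u_2$, adding $\{u_1,u_2\}$ yields a triangle whose two faces meet in the whole boundary, so the lemma as literally stated would fail. You flag this as holding ``for the graphs under consideration'', which is accurate --- in the paper the lemma is only applied to graphs arising from $3$-connected standard graphs, and each of the operations I, II, III preserves $2$-connectedness, so all intermediate graphs have cycle face boundaries --- but a complete write-up should state this hypothesis explicitly. With these adjustments, the checks you list (the new chord is parallel to no edge since $u_1,u_2$ are non-adjacent; $T\cap E^\ast$ is the chord; $T$ meets $F_0$, $F_1$ in the edges $\{u_1,u_3\}$, $\{u_3,u_2\}$ while $E^\ast$ meets them in the single vertices $u_1$, $u_2$; every other intersection is contained in the corresponding old one and hence is at most one vertex) go through in all three parts, with the subdivided faces handled exactly as you describe.
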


\subsection{Graph duality}
We denote the dual of a plane graph $G$ by $G^\star$, see
\cite[Section~4.1]{Ziegler95}. It is well known that $G^\star$ is also a plane
graph, and $G^\star$ is $3$-connected if and only if, $G$ is 3-connected.

According to the following fact, simple $\DtoY$ reductions and simple $\YtoD$ reductions are dual to each other, see \cite[Section~4.2]{Ziegler95}.

\begin{prop}\label{prop:dyduality}
Let $G$ and $G^\prime$ be 3-connected plane graphs. Then $G^\prime$ is obtained
from $G$ by a simple $\DtoY$ reduction if and only if, $G^{\prime\star}$ is
obtained from $G^\star$ by a simple $\YtoD$ reduction.
\end{prop}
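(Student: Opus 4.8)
The plan is to deduce the statement, along the lines of \cite[Section~4.2]{Ziegler95}, from two facts: that the $\DtoY$ operation (before the subsequent SP-reductions) is dual to the $\YtoD$ operation, and that a single SP-reduction on a plane graph is dual to a single SP-reduction of the other kind on its dual. Throughout I would work with plane multigraphs and their duals, since the graph obtained from a $3$-connected plane graph by a $\DtoY$ operation need not be simple or $3$-connected — it may have a vertex of degree two or a pair of parallel edges. Duality of plane multigraphs is still well defined and involutive, it interchanges vertices with faces and fixes the set of edges, and in particular it interchanges triangular faces with vertices of degree three.

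For the first fact, suppose $G'$ is obtained from $G$ by a $\DtoY$ operation at a triangular face $F_0$ with vertices $v_1,v_2,v_3$ and bounding edges $e_m=\{v_p,v_q\}$ for $\{m,p,q\}=\{1,2,3\}$; let $F_m$ be the face of $G$ on the other side of $e_m$, and observe that $F_0,F_1,F_2,F_3$ are pairwise distinct because $G$ is $3$-connected. In $G'$ the edges $e_1,e_2,e_3$ are deleted, so $F_0,F_1,F_2,F_3$ merge into a single face, into which the new vertex $v$ and the new edges $f_i=\{v,v_i\}$ are inserted; the ``tripod'' on $v$ with leaves $v_1,v_2,v_3$ splits this merged face into three faces $F_1',F_2',F_3'$, where $F_m'$ is bounded by the two arms $f_p,f_q$ together with the boundary arc inherited from $F_m$. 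One then checks that every edge of $G$ which survives in $G'$ and bordered $F_m$ now borders $F_m'$ (its other incident face being unchanged), while $f_m$ borders $F_p'$ and $F_q'$. Hence, identifying the dual vertex of $F_m'$ with that of $F_m$, passing from $G^\star$ to $G'^\star$ deletes the degree-three vertex dual to $F_0$ together with its incident edges $e_1^\star,e_2^\star,e_3^\star$ and inserts a triangle on the duals of $F_1,F_2,F_3$, with edges $f_1^\star,f_2^\star,f_3^\star$ — which is exactly a $\YtoD$ operation. The converse holds because duality is an involution.

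For the second fact, a vertex $x$ of degree two in a plane multigraph $H$, with incident edges $e,e'$, separates exactly two faces $A,B$, so $e^\star,e'^\star$ is a pair of parallel edges of $H^\star$ joining the duals of $A$ and $B$; suppressing $x$ and merging $e,e'$ into one edge is dual to merging $e^\star,e'^\star$ into one edge, and conversely. Since a simple $\DtoY$ (resp.\ $\YtoD$) reduction is by definition a $\DtoY$ (resp.\ $\YtoD$) operation followed by all SP-reductions that then become possible, and the fully SP-reduced form of a plane multigraph is determined up to isomorphism, the two facts combine to show that the result of a simple $\DtoY$ reduction applied to $G$ is dual to the result of the corresponding simple $\YtoD$ reduction applied to $G^\star$. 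Together with the hypothesis that $G$ and $G'$ are $3$-connected (so that $G^\star$ and $G'^\star$ are their genuine duals), this gives the proposition.

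I expect the main obstacle to be the bookkeeping in the first fact: correctly matching the three new faces $F_1',F_2',F_3'$ of $G'$ with the faces $F_1,F_2,F_3$ of $G$ and verifying that the incidences of the surviving edges are exactly as claimed, so that $G'^\star$ really is $G^\star$ with one vertex expanded into a triangle. The degenerate cases — when some $v_i$ has degree three, which forces an SP-reduction afterwards and on the dual side manifests as a pair of parallel edges created by the inserted triangle — should then be a routine consequence of the second fact rather than a separate difficulty.
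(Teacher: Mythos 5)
Your argument is correct, and it is essentially the argument behind the paper's treatment of this statement: the paper offers no proof of Proposition~\ref{prop:dyduality} at all, but simply cites \cite[Section~4.2]{Ziegler95}, where the same two ingredients appear — the $\DtoY$/$\YtoD$ operations are dual to each other as operations on plane multigraphs, and series and parallel SP-reductions are interchanged by duality. The only step you assert without justification is that the fully SP-reduced form is independent of the order of reductions; in the present setting this is harmless, since after a single $\DtoY$ (resp.\ $\YtoD$) operation on a $3$-connected plane graph the possible SP-reductions are the few listed in \cite[Corollary~4.7]{Ziegler95} (the four types of Figure~\ref{fig:fig3}) and they do not interact, so no genuine confluence argument is needed.
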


\subsection{The dual of a ball polyhedron}

In the following, $\mathcal{F}(\mathbf{B}[X])$ denotes the set of faces, and
$\mathcal{V}(\mathbf{B}[X])$ denotes the set of vertices of the ball polyhedron
$\BX$.

Let $\BX$ be a ball polyhedron in $\Re^3$ all of whose faces contain at least
three vertices. In \cite{BN06}, the \emph{dual} of $\BX$ is introduced as the ball polyhedron
$\B[\mathcal{V}(\BX)]$, and  a bijection, called the \emph{duality mapping} between $\BX$ and $\B[\mathcal{V}(\BX)]$, is given
between the faces, edges and vertices of $\BX$ and $\B[\mathcal{V}(\BX)]$, consisting of the following three mappings:

\begin{enumerate}
  \item The \emph{vertex-face} mapping is\\
  $\mathcal{V}(\BX)\ni v\mapsto V \in \mathcal{F}(\mathbf{B}[\mathcal{V}(\BX)])$\\
  where $V$ is the face of $\mathbf{B}[\mathcal{V}(\BX)]$ with $v$ as its center.
  \item The \emph{face-vertex} mapping is\\
  $\mathcal{F}(\BX)\ni F\mapsto f \in \mathcal{V}(\mathbf{B}[\mathcal{V}(\BX)])$\\
  where $f$ is the center of the sphere supporting the face $F$.
  \item The \emph{edge-edge} mapping is the following.
  Two vertices in $\mathbf{B}[\mathcal{V}(\BX)]$ are connected by an edge if and only if, the corresponding faces of $\BX$ meet in an edge.
\end{enumerate}

Note that every face of a standard ball polyhedron contains at least three
edges. The relationship between graph duality and duality of ball polyhedra is
described below.

\begin{lem}[Theorem 6.6.5., \cite{Bezdek13}]\label{lem:bezdektheorem}
Let $P$ be a standard ball polyhedron of $\mathbb{R}^3$. Then the intersection
$P^\star$ of the closed unit balls centered at the vertices of $P$ is another
standard ball polyhedron whose face lattice is dual to that of $P$.
\end{lem}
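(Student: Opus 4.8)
The plan is to read off the statement from the duality mapping of \cite{BN06} recalled above: I would check that this mapping is an inclusion-reversing bijection between the face structure of $P$ and that of $P^\star$, and then invoke the characterisation of standard ball polyhedra as those whose vertex--edge--face structure forms a lattice. First I would record two preliminary facts. Letting $\B[c,r]$ with $r<1$ be the smallest ball containing $X$, and using that $c$ lies in the convex hull of the points of $X$ at distance $r$ from $c$, a short computation gives $P = \BX \subseteq \B[c,\sqrt{1-r^2}]$; hence $\mathcal{V}(P) \subseteq P$ is finite, nonempty, and contained in a ball of radius less than $1$, so $P^\star = \B[\mathcal{V}(P)]$ is a ball polyhedron in the sense of the introduction. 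Also, since $P$ is standard, every face of $P$ contains at least three edges and hence at least three vertices, so the duality construction of \cite{BN06} applies to $P$ and produces the vertex--face, face--vertex and edge--edge mappings.

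Next I would combine these three mappings into a single bijection $\Phi$ sending the vertices, edges and faces of $P$ to the faces, vertices and edges of $P^\star$, and verify that $\Phi$ reverses containment. For instance, for a vertex $v$ and a face $F = \S(x) \cap P$ of $P$ one has $v \in F$ iff $d(v,x) = 1$ iff $x \in \S(v)$; since $x = \Phi(F)$ is a vertex of $P^\star$ and lies in $P^\star$, this is in turn equivalent to $\Phi(F) \in \S(v) \cap P^\star = \Phi(v)$. The incidences among a vertex, an edge and a face of $P$ are checked in the same spirit, using that an edge of $P$ on $\S(x_1) \cap \S(x_2)$ with endpoints $v_1$ and $v_2$ corresponds to the edge of $P^\star$ joining the vertices $x_1$ and $x_2$, which lies on $\S(v_1) \cap \S(v_2)$ since $d(v_i,x_j) = 1$ for all $i,j$. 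The delicate point, which I expect to be the main obstacle, is to show that $\Phi$ maps \emph{onto} the entire face structure of $P^\star$, i.e.\ that $P^\star$ has no faces, edges or vertices beyond those in the image of $\Phi$; equivalently, that $\mathcal{V}(P)$ is a reduced set of centers for $P^\star$, so that every sphere $\S(v)$ with $v \in \mathcal{V}(P)$ meets $P^\star$ in a genuine two-dimensional face. This is precisely where the standardness of $P$ (rather than just having faces with at least three vertices) is used: a vertex $v$ of a standard ball polyhedron lies on at least three faces of $P$, and their $\Phi$-images are at least three distinct vertices of $P^\star$ on $\S(v) \cap P^\star$ that span a non-degenerate spherical region there; together with the surjectivity statement in \cite{BN06}, this yields that $\Phi$ is the claimed bijection of face structures.

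Finally, once $\Phi$ is known to be an inclusion-reversing bijection between the face structures of $P$ and $P^\star$, the face poset of $P^\star$ --- its vertices, edges and faces ordered by containment, with the empty set and the whole body adjoined as least and greatest elements --- is the order-dual of the face poset of $P$. By hypothesis the latter poset is a lattice; the order-dual of a lattice is again a lattice; hence the face poset of $P^\star$ is a lattice, which by the characterisation recalled in the introduction means exactly that $P^\star$ is a standard ball polyhedron. Moreover its face lattice is, by the construction of $\Phi$, dual to that of $P$, which completes the argument.
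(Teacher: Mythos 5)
The paper offers no proof of this statement at all: it is imported as Theorem~6.6.5 of \cite{Bezdek13}, with the underlying duality mapping quoted from \cite{BN06}, so there is no in-paper argument to compare yours against. Your proposal is essentially an attempt to re-derive the cited result from the three mappings recalled in the paper. Several of its ingredients are fine: the smallest-enclosing-ball computation showing $P\subseteq \B[c,\sqrt{1-r^2}]$, hence that $\mathcal{V}(P)$ is an admissible center set for a ball polyhedron; the incidence-reversal checks via $d(v,x)=1$; and the closing observation that an order-reversing bijection of the vertex--edge--face posets transports the lattice property of $P$ to $P^\star$.

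The genuine gap sits exactly at the point you yourself flag as delicate, and your fix does not close it. Surjectivity of $\Phi$ requires, among other things, that for every vertex $v$ of $P$ the set $\S(v)\cap P^\star$ is a genuine two-dimensional face (i.e.\ $\mathcal{V}(P)$ is a reduced center set for $P^\star$) and that $P^\star$ has no vertices or edges beyond the $\Phi$-images. Your argument for the first point --- the centers of three faces of $P$ through $v$ give three distinct points of $\S(v)\cap P^\star$ which ``span a non-degenerate spherical region'' --- is unjustified: three such points could a priori lie on a common great circle of $\S(v)$ (this happens exactly when those face centers are coplanar with $v$), in which case spindle convexity only yields a circular arc, not a $2$-dimensional region; excluding this degeneracy (say, via the tangent cone of $P$ at $v$ together with $\inter(P)\neq\emptyset$) is a real geometric step that is missing, and nothing is said at all about why no extra vertices or edges of $P^\star$ can occur. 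Moreover, appealing simultaneously to ``the surjectivity statement in \cite{BN06}'' is circular in spirit: if that reference already supplies full bijectivity of the duality mapping, the lemma reduces to your order-dual remark and the standardness argument is superfluous; if it does not, then this bijectivity --- the actual content of Theorem~6.6.5 --- is precisely what remains unproven. As written, the proposal is a reasonable reduction of the lemma to the cited duality machinery, not an independent proof of it.
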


\section{Proof of Theorem \ref{thm:mainresult}}\label{sec:proofmain}

Our main contribution follows.
\begin{thm}\label{thm:maingeometry}
Let $G^\prime$ be a 3-connected plane graph, and let the graph $G$ be derived
from $G^\prime$ by a simple $\YtoD$ reduction. If $G$ is the edge-graph of a
standard ball polyhedron in $\mathbb{R}^3$, then so is $G^\prime$.
\end{thm}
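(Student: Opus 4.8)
The plan is to work with the dual picture and carry out the four cases of a simple $\YtoD$ reduction (cf. Figure~\ref{fig:fig3}(B)) more or less explicitly. By Proposition~\ref{prop:dyduality}, $G^{\prime\star}$ is obtained from $G^\star$ by a simple $\DtoY$ reduction, and by Lemma~\ref{lem:bezdektheorem} the dual ball polyhedron $P^\star$ of the given standard ball polyhedron $P$ (with edge-graph $G$) realizes $G^\star$. So the problem reduces to: given a standard ball polyhedron $Q$ realizing a $3$-connected plane graph $H$, and $H'$ obtained from $H$ by a simple $\DtoY$ reduction, construct a standard ball polyhedron $Q'$ realizing $H'$; then set $P' := (Q')^\star$ and use Lemma~\ref{lem:bezdektheorem} again, checking via Remark~\ref{rem:standard} and Lemma~\ref{lem:dystandardgraph} that everything stays standard. (One has to be slightly careful that the edge-graph of $Q'$ is exactly $H'$ and not some graph with an isomorphic but differently-embedded face structure, but $3$-connectedness of $H'$ makes the embedding unique by the remark after the definition of standard graphs.)

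The geometric heart is therefore the $\DtoY$ step on a ball polyhedron: we have a triangular face $F$ of $Q$ lying on a generating sphere $\S(x_F)$, bounded by three arcs on spheres $\S(x_1),\S(x_2),\S(x_3)$, and we must replace $\S(x_F)$ by a new generating sphere (equivalently, add a new center) so that $F$ is ``pushed out'' to a vertex $v$ incident to the three faces on $\S(x_1),\S(x_2),\S(x_3)$, while SP-reductions clean up the low-degree artifacts. Concretely, I would delete $x_F$ from the center set and instead add a new center $x_v$ placed so that $\S(x_v)$ passes through the three ``corner'' vertices of the old face $F$ and cuts off exactly the cap that was $F$, i.e. $x_v$ is chosen on the line through $x_F$ perpendicular to the plane of those three corner vertices, on the far side, at the unique distance making $\mathbf{B}[x_v]$ a unit ball through those three points. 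Then $\mathbf{B}[X \setminus \{x_F\} \cup \{x_v\}]$ agrees with $\mathbf{B}[X]$ outside a neighborhood of $F$, and near $F$ the three old edges of $F$ become three new faces meeting at the single new vertex on $\S(x_v)$; the remaining vertices of those three faces, together with the fact that $Q$ was standard, give the claimed face structure. The four types of reduction differ only in how many of the $v_i$ had outer degree one (equivalently, how many neighboring faces of $F$ are triangles that collapse under SP-reduction), and in each case one must additionally push or merge one or two adjacent generating spheres; I would treat the generic type first and then indicate the modifications.

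There are two points requiring real care, and the second is where I expect the main obstacle. First, one must verify the combinatorial claim that after the swap the new vertex–edge–face incidences are precisely those of $H'$ — this is a local check around $F$ using that $Q$ is standard, so no pair of faces met in more than an edge and no two edges shared two vertices, hence $\S(x_v)$ genuinely creates a single new vertex rather than, say, reintersecting some other generating sphere. Second, and harder: one must guarantee that the new center $x_v$ (and any pushed centers) can be chosen so that $x_v$ is \emph{not redundant}, that the perturbation is small enough not to destroy any other face, edge, or vertex of $Q$, and — crucially — that the new ball polyhedron is again \emph{standard}, i.e. that introducing $\S(x_v)$ does not create two faces meeting in two edges or two edges sharing two vertices elsewhere. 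The standard escape is a continuity/openness argument: the set of placements of the new center(s) that yield a ball polyhedron combinatorially equivalent to the target is open and, by an explicit construction at one point (e.g. starting from a highly symmetric realization, or by a limiting argument letting $x_v \to x_F$), nonempty. Making this rigorous — in particular showing standardness is preserved, not merely the abstract graph — is the step I would budget the most effort for; Lemma~\ref{lem:standardgraphs} is presumably the combinatorial input that tells us which edges/subdivisions to add so that the target graph $H'$ is automatically standard, leaving only the geometric realizability to nail down.
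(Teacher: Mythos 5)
Your dualization step is set up in the wrong direction, and this is a genuine gap rather than a cosmetic one. From the hypothesis that $G$ is obtained from $G^\prime$ by a simple $\YtoD$ reduction, Proposition~\ref{prop:dyduality} gives that $G^\star$ is obtained from $G^{\prime\star}$ by a simple $\DtoY$ reduction --- not, as you claim, that $G^{\prime\star}$ is obtained from $G^\star$ by one (your version would be equivalent, via the same proposition, to $G^\prime$ being obtained from $G$ by a simple $\YtoD$ reduction, which is impossible since $G^\prime$ has more vertices). So after passing to the dual you still face a problem of the original type: realize the graph that a simple reduction was applied \emph{to}, given a realization of its reduction. Your proposed subproblem ``given $Q$ realizing $H$ and $H^\prime$ a simple $\DtoY$ reduction of $H$, realize $H^\prime$'' is therefore not the task at hand; in fact the proof of Theorem~\ref{thm:mainresult} handles exactly that converse direction by dualizing and invoking Theorem~\ref{thm:maingeometry}, so your reduction goes in a circle. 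A closely related omission: the inverse of a simple $\YtoD$ reduction is in general \emph{not} a simple $\DtoY$ reduction, because the SP-reductions performed in $G^\prime\to G$ discard the internal edges among $v_1,v_2,v_3$ and the degree-two subdividing vertices. Your proposal has no mechanism for re-creating these lost vertices and edges, whereas the paper needs a separate construction for precisely this (Lemma~\ref{lem:addingextraballs}: extra generating balls obtained by slightly rotating one of the existing spheres about an axis through $u_1,u_2$, or through interior points of the edges $a_1,a_2$). Your remark about ``pushing or merging one or two adjacent generating spheres'' points in the opposite direction, collapsing features instead of creating them.

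The geometric core also does not do what you assert. In the paper's argument no new ball is needed for the $\Delta$-to-$Y$ step: simply deleting $x_\Lambda$ already produces the $Y$, because the new part of the boundary lies on $\S(x_1)\cup\S(x_2)\cup\S(x_3)$ (Claim~\ref{claim:threespheres}) and these three unit spheres meet in exactly two points (Claim~\ref{claim:threeballs}), one of which becomes the new $3$-valent vertex. Your recipe --- delete $x_F$ and add the unit sphere through the three corner vertices of the old triangular face, centered on the far side of their plane (i.e.\ the reflection of $\S(x_\Lambda)$ in that plane) --- would cut the polyhedron with a cap passing through $v_1,v_2,v_3$ and would again yield a triangular face with those three vertices, not a single new vertex replacing the triangle. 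Finally, the openness/continuity argument you defer to for standardness is exactly what the paper avoids: there the construction is exact, and standardness is verified combinatorially via Lemma~\ref{lem:dystandardgraph}, Lemma~\ref{lem:standardgraphs} and Remark~\ref{rem:standard}, so no perturbation argument is required.
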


First, we show how Theorem~\ref{thm:maingeometry} implies
Theorem~\ref{thm:mainresult}.

\begin{proof}[Proof of Theorem~\ref{thm:mainresult}]
Let $G$ be a $3$-connected simple plane graph. By
Proposition~\ref{prop:zieglercorollary}, the graph $G$ reduces to $K_4$ the
edge-graph of the standard ball tetrahedron by a sequence of simple $\DY$
reductions.

Now we show that the standard ball tetrahedron can be gradually turned into a
realization of $G$. Let $H$ be the edge-graph of a standard ball polyhedron and
assume that $H$ is obtained from another edge-graph $H^\prime$ by a simple $\DY$
reduction. We want to show that $H^\prime$ is realized by a standard
ball polyhedron. So we need
to discuss two cases:

\emph{First}, assume that $H$ is obtained from $H^\prime$ by a simple $\YtoD$
reduction. Then by Theorem~\ref{thm:maingeometry}, $H^\prime$ is realized by a
standard ball polyhedron.

\emph{Second}, assume that $H$ is obtained from $H^\prime$ by a simple $\DtoY$
reduction. Then by Proposition~\ref{prop:dyduality}, we get that the edge-graph
$H^\star$ is obtained from the edge-graph $H^{\prime\star}$ by a simple $\YtoD$
reduction. By Lemma~\ref{lem:bezdektheorem}, the edge-graph $H^\star$ is realized by a
standard ball polyhedron, and by Theorem~\ref{thm:maingeometry}, the edge-graph
$H^{\prime\star}$ is realized by a standard ball polyhedron. Again by
Lemma~\ref{lem:bezdektheorem}, the edge-graph $H^\prime$ is realized by a standard
ball polyhedron, and this completes the proof.
\end{proof}

\begin{center}
 \includegraphics[width=13cm,height=5cm]{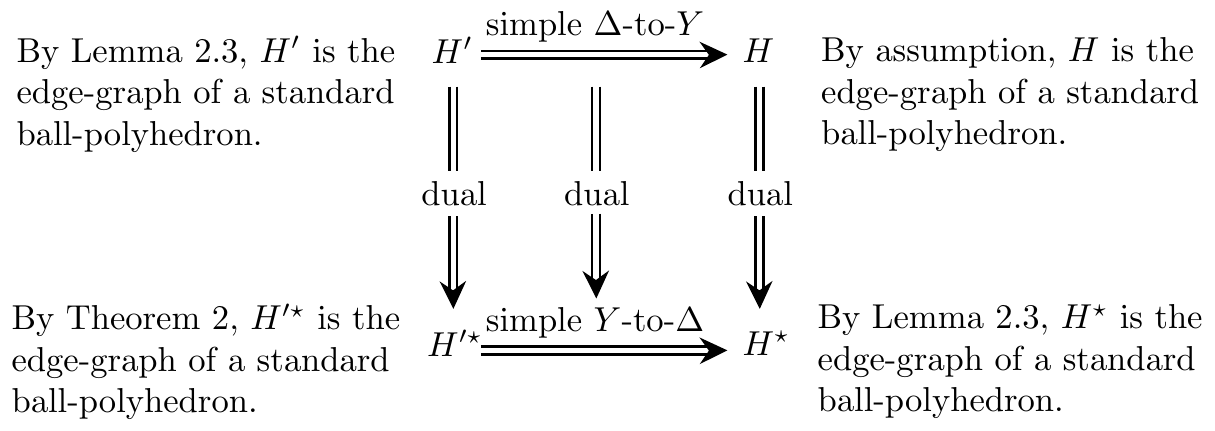}
\end{center}

\section{Proof of Theorem~\ref{thm:maingeometry}}\label{sec:proofgeo}

Let $\emptyset\neq X\subset \mathbb{R}^3$ be a finite set and $\BX$ be a
standard ball polyhedron with edge-graph $G$, and assume that $G$ is
obtained from a graph $G^\prime$ by a simple $\YtoD$ reduction. We
need to show that $G^\prime$ is realized by a standard ball polyhedron.

Let $\Lambda$ denote the triangular face of $\BX$ which realizes the triangle
obtained in the $\YtoD$ reduction, let $\S(x_\Lambda)$ be its supporting unit sphere,
and $v_1$, $v_2$, $v_3$ be the vertices of $\Lambda$ and $e_1$, $e_2$, $e_3$ the
edges. Let $F_1$, $F_2$ and $F_3$ denote the faces of $\BX$ distinct from
$\Lambda$ containing $e_1$, $e_2$ and $e_3$ respectively, and let $\S(x_1)$,
$\S(x_2)$ and $\S(x_3)$ be the unit spheres supporting these faces, see
Figure~\ref{fig:fig4}.

\begin{figure}[ht]
    \centering
 \includegraphics[width=4cm,height=3.5cm]{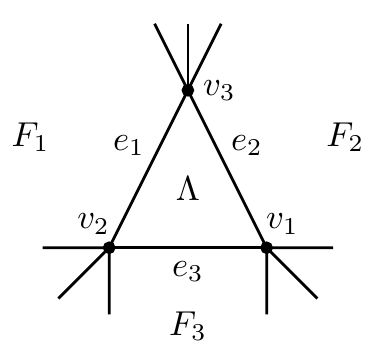}
    \caption{}
    \label{fig:fig4}
\end{figure}

The starting point of the proof of Theorem~\ref{thm:maingeometry} is the removal
of the ball that generates the triangular face $\Lambda$. Thus, we obtain another
ball polyhedron, $\mathbf{B}[X\setminus\{x_\Lambda\}]$. The following lemma
(which we prove later) describes the edge-graph of
$\mathbf{B}[X\setminus\{x_\Lambda\}]$ and, combined with Lemma~\ref{lem:dystandardgraph} yields
that it is a standard graph, and hence, by Remark~\ref{rem:standard},
$\mathbf{B}[X\setminus\{x_\Lambda\}]$ is a standard ball polyhedron.

\begin{lem}\label{lem:edgegraphofnewballpolyhedron}
The edge-graph of the ball polyhedron $\mathbf{B}[X\setminus\{x_\Lambda\}]$ is
obtained by a simple $\DtoY$ reduction applied to $\Lambda$ in the role of
$K_3$.
\end{lem}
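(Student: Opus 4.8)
The plan is to understand precisely how removing the generating ball $\B[x_\Lambda]$ affects the boundary of the ball polyhedron near the triangular face $\Lambda$, and to show this matches exactly a $\DtoY$ operation followed by the SP-reductions that become possible. First I would observe that the only part of the boundary that changes is the region that was ``covered'' by $\S(x_\Lambda)$, namely the face $\Lambda$ together with its three edges $e_1,e_2,e_3$ and three vertices $v_1,v_2,v_3$; everything outside a neighborhood of $\Lambda$ is unaffected because $\B[X\setminus\{x_\Lambda\}]$ and $\BX$ agree there. So the combinatorial change is local to the $K_3$ formed by $v_1,v_2,v_3$.

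Next I would identify what replaces $\Lambda$. Near $\Lambda$, the new boundary is the portion of $\bd \B[X\setminus\{x_\Lambda\}]$ formerly hidden ``behind'' $\S(x_\Lambda)$; since $X$ is reduced, $x_\Lambda$ was genuinely needed, so this hidden region is a nonempty curved region bounded by arcs of $\S(x_1),\S(x_2),\S(x_3)$ (the spheres of the three neighboring faces $F_1,F_2,F_3$). Generically, these three spheres meet pairwise in new edges and meet all together at a new vertex $v$, so $\Lambda$ gets replaced by a tripod $K_{1,3}$ with center $v$ and legs to $v_1,v_2,v_3$: this is exactly the $\DtoY$ operation applied to $\Lambda$. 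I would argue, using the standardness of $\BX$ and the structure of the three types of simple $\DtoY$ reductions in Figure~\ref{fig:fig3}(A), that after this operation any degree-2 vertices or parallel edges created (precisely when $\Lambda$ was a $\Delta_1$, $\Delta_2$, or $\Delta_3$ configuration, i.e.\ when some of $v_1,v_2,v_3$ had outer degree one) are cleaned up by the forced SP-reductions, yielding precisely one of the four types of simple $\DtoY$ reduction. In the $\Delta_0$ case no SP-reduction is needed.

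The main obstacle, and the step requiring the most care, is verifying that the three neighboring spheres $\S(x_1),\S(x_2),\S(x_3)$ really do produce a single new vertex and three new edges -- in other words, that the ``hidden region'' behind $\S(x_\Lambda)$ has the combinatorial type of a curvilinear triangle (or the appropriate degenerate version matching SP-reductions), rather than something more complicated. Here I would use that $\BX$ is a \emph{standard} ball polyhedron: the faces $F_1,F_2,F_3$ pairwise share at most one edge with $\Lambda$ and with each other, the generating spheres have radius $1$ while $X$ lies in a ball of radius less than $1$, and the local geometry of three unit spheres through a common small region forces exactly the expected incidence pattern. I would also need to rule out that removing $x_\Lambda$ inadvertently destroys some other face (i.e.\ that no face other than $\Lambda$ disappears), which again follows from locality and from $X$ being reduced.

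Finally, I would assemble these observations: the edge-graph of $\B[X\setminus\{x_\Lambda\}]$ differs from that of $\BX$ only by replacing the triangular face $\Lambda$ with a $3$-valent vertex and its three incident edges, followed by the SP-reductions this necessitates; by definition this is a simple $\DtoY$ reduction applied to $\Lambda$ in the role of $K_3$, which is the assertion of Lemma~\ref{lem:edgegraphofnewballpolyhedron}. Combined with Lemma~\ref{lem:dystandardgraph} and Remark~\ref{rem:standard}, this also gives that $\B[X\setminus\{x_\Lambda\}]$ is a standard ball polyhedron, as claimed in the surrounding text.
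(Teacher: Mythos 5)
Your overall strategy is the same as the paper's: after deleting $\B[x_\Lambda]$, the newly exposed part of the boundary should be a tripod lying on $\S(x_1)\cup\S(x_2)\cup\S(x_3)$, so that $\Lambda$ is replaced by a $K_{1,3}$, with SP-reductions accounting for vertices of $\Lambda$ of outer degree one. But there is a genuine gap at precisely the step you yourself flag as the main obstacle, and the way you propose to cross it (``generically'', ``the local geometry \ldots forces exactly the expected incidence pattern'') is not an argument. Two facts carry the whole lemma, and neither is established in your plan. First, you assert without proof that the hidden region is bounded by arcs of $\S(x_1),\S(x_2),\S(x_3)$ only; a priori some other generating sphere $\S(x_q)$ with $x_q\in X\setminus\{x_\Lambda,x_1,x_2,x_3\}$ could contribute to $\bd\bigl(\mathbf{B}[X\setminus\{x_\Lambda\}]\bigr)\setminus\bd(\BX)$. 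Reducedness of $X$ only guarantees that the new part is nonempty, and ``locality'' is exactly what needs proving. The paper handles this with a specific argument (its claim on the three spheres): if $q$ lies on the new part and on $\S(x_q)$, then, writing $F=\S(x_q)\cap\BX$ and $F^\prime=\S(x_q)\cap\mathbf{B}[X\setminus\{x_\Lambda\}]$, the set $F^\prime\cap\S(x_\Lambda)$ is a non-degenerate circular arc separating $q$ from $F$ inside $F^\prime$, hence $F$ meets $\Lambda$ in an edge of $\BX$ and $x_q\in\{x_1,x_2,x_3\}$. Nothing playing this role appears in your proposal.

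Second, even granting that, one must show the new part is exactly one of the two components of $\bd(\Gamma)\setminus\gamma$, where $\Gamma=\B[x_1]\cap\B[x_2]\cap\B[x_3]$ and $\gamma=\bd(\Lambda)$, and that this component consists of one of the two points of $\S(x_1)\cap\S(x_2)\cap\S(x_3)$ together with sub-arcs of the three edges of $\Gamma$ ending at $v_1,v_2,v_3$ (one $v_i$ on each edge, which uses that $v_i\in\inter(\B[x_i])$). The paper gets this by noting that $\S(x_\Lambda)\cap\Gamma$ and $\Lambda$ are intersections of the same three spherical caps on $\S(x_\Lambda)$, each smaller than a hemisphere, so $\S(x_\Lambda)\cap\Gamma=\Lambda$ and $\S(x_\Lambda)\cap\bd(\Gamma)=\gamma$; hence $\gamma$ separates $\bd(\Gamma)$ into two components containing $q$ and $\bar q$ respectively, with $\bar q\in\inter(\B[x_\Lambda])$ and $q$ on the new part. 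An appeal to genericity is not available: the configuration is fixed, and this incidence pattern is the content of the lemma, not a generic expectation. (Your side worry that some other face might ``disappear'' is not the issue --- removing a ball only enlarges the intersection --- the issue is which spheres, vertices and edges appear on the newly exposed region.)
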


The edge-graph of $\mathbf{B}[X\setminus\{x_\Lambda\}]$ described in Lemma~\ref{lem:edgegraphofnewballpolyhedron} may be $G^\prime$, in
which case we are done. However, it may happen that this is not $G^\prime$, more precisely, the graph $G$ is derived from $G^\prime$ by a simple $\YtoD$ reduction, but the converse is not always true, it may happen that $G^\prime$ is not derived from $G$ by a simple $\DtoY$ reduction. The reason is that when we do a $\DtoY$ reduction, the vertices of the triangle of outer degree one in $G$ become degree two vertices in the graph obtained from $G$ by a $\DtoY$ reduction. Next, we do the \emph{SP}-reduction, and these vertices are lost, see Figure~\ref{fig:fig5} (C) and (D). Moreover, the internal edges will be missing as well, see Figure~\ref{fig:fig5} (B), (C) and (D).

The following lemma describes how the edge-graph of $\mathbf{B}[X\setminus\{x_\Lambda\}]$ is converted into $G^\prime$ by adding the missing vertices and edges. We achieve this by adding some extra balls. Lemma~\ref{lem:standardgraphs} yields that $G^\prime$ is a standard graph, and hence, by Remark~\ref{rem:standard}, the ball polyhedron realizing $G^\prime$ is a standard ball polyhedron.

\begin{figure}[ht]
    \centering
 \includegraphics[width=12cm,height=12cm]{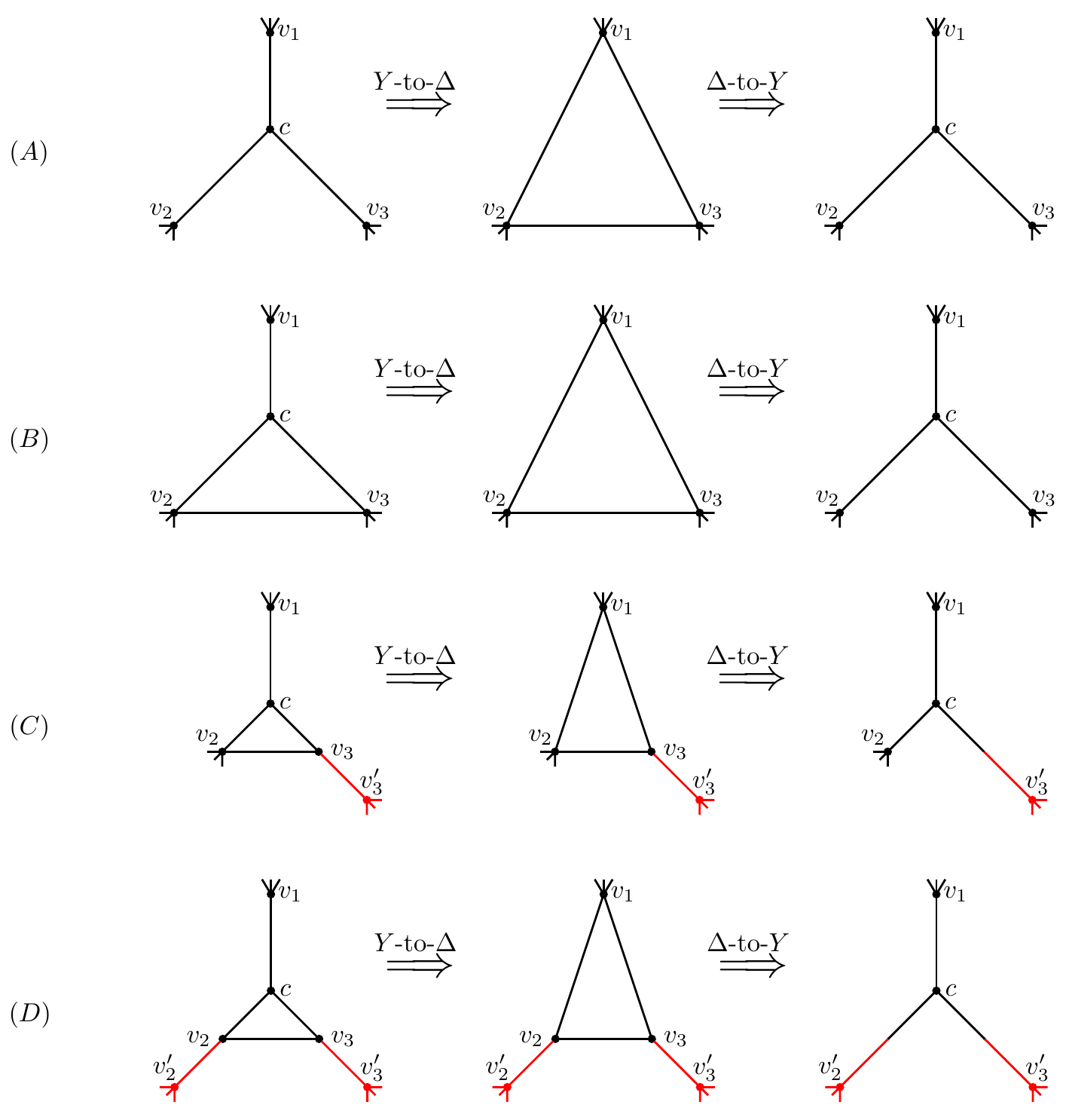}
    \caption{}
    \label{fig:fig5}
\end{figure}

\begin{lem}\label{lem:addingextraballs} Let $\emptyset\neq Z\subset\mathbb{R}^3$ be a finite set and $\B[Z]$ be a ball polyhedron. If the edge-graph of $\B[Z]$ contains $Y_0$ as an induced subgraph whose vertices are $u$, $u_1$, $u_2$ and $u_3$, and whose edges are $a_1$, $a_2$ and $a_3$, see Figure~\ref{fig:fig6}, left side, then
\begin{enumerate}
    \item[\emph{I.}] there exists a center $w$ such that the edge-graph of the ball polyhedron $\B[Z\cup \{w\}]$ is obtained from the edge-graph of $\B[Z]$ by adding the internal edge $\{u_1, u_2\}$.
    \item[\emph{II.}] there exists a center $w^\prime$ such that the edge-graph of the ball polyhedron $\B[Z\cup \{w^\prime\}]$ is obtained from the edge-graph of $\B[Z]$ by adding the edge $\{u_2, u_1^\prime\}$, where $u_1^\prime$  is a new vertex subdividing the edge $a_1$.
    \item[\emph{III.}] there exists a center $w^{\prime\prime}$ such that the edge-graph of the ball polyhedron $\B[Z\cup \{w^{\prime\prime}\}]$ is obtained from the edge-graph of $\B[Z]$ by adding the edge $\{u_1^\prime, u_2^\prime\}$, where $u_1^\prime$ and $u_2^\prime$ are two new vertices subdividing the edges $a_1$ and $a_2$ respectively.
\end{enumerate}
\end{lem}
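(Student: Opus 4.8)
The plan is to realize each of the three operations by adjoining a single unit ball that ``shaves off'' a small triangular corner of $\B[Z]$ at $u$, on the side of the face incident to both $a_1$ and $a_2$.

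First I would set up the local picture. Let $E_1,E_2,E_3$ be the three faces of $\B[Z]$ meeting at $u$, supported respectively by unit spheres $\S(z_1),\S(z_2),\S(z_3)$, labelled so that $E_1$ is the face incident to both $a_1$ and $a_2$, while $a_1\subseteq\S(z_1)\cap\S(z_3)$, $a_2\subseteq\S(z_1)\cap\S(z_2)$ and $a_3\subseteq\S(z_2)\cap\S(z_3)$. Then $u\in\S(z_1)\cap\S(z_2)\cap\S(z_3)$, the vertices $u_1,u_2$ and the entire arcs $a_1,a_2$ lie on $\S(z_1)$, whereas $u_1\in\S(z_3)$ and $u_2\in\S(z_2)$.

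Now the construction. In case I put $p_1:=u_1$, $p_2:=u_2$; in case II put $p_1:=u_1'$, $p_2:=u_2$; in case III put $p_1:=u_1'$, $p_2:=u_2'$, where in the last two cases $u_1'\in\mathrm{relint}\,a_1$ and $u_2'\in\mathrm{relint}\,a_2$ are to be chosen (I expect: close to $u_1$, resp.\ close to $u_1$ and $u_2$). Since $u,p_1,p_2$ all lie on $\S(z_1)$, they determine a circle $C\subseteq\S(z_1)$, and exactly two unit spheres contain $C$, one being $\S(z_1)$ itself; let $\S(w)$ be the other one. The key claim is that, for the right choice of $p_1,p_2$ and of which sphere to take, $\B[Z]\setminus\inter\B[w]$ is precisely a thin solid sliver capped by the triangular region $R\subseteq E_1$ bounded by the sub-arc of $a_1$ from $u$ to $p_1$, the sub-arc of $a_2$ from $u$ to $p_2$, and the arc of $C$ from $p_1$ to $p_2$ avoiding $u$. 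Granting this, the new face $F:=\S(w)\cap\B[Z\cup\{w\}]$ is a triangular face with vertices $u,p_1,p_2$ whose side on $\S(z_1)$ is the new edge $\{p_1,p_2\}$, whose side on $\S(z_3)$ joins $u$ to $p_1$, and whose side on $\S(z_2)$ joins $u$ to $p_2$; the faces $E_2,E_3$ are merely truncated near $u$ and stay incident to $u$, the face $E_1$ loses exactly $R$, and every other face, edge and vertex of $\B[Z]$ is unchanged. Reading off the edge-graph: $u$ retains degree three with neighbours $u_3,p_1,p_2$ (so in case II the edge $a_1$, and in case III both $a_1$ and $a_2$, gets subdivided), while $p_1$ and $p_2$ each acquire exactly the new edge $\{p_1,p_2\}$, and nothing else changes — which is the asserted transformation. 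It remains to observe the routine points that $w\notin Z$ and that $\B[Z\cup\{w\}]$ has nonempty interior, so that $Z\cup\{w\}$ lies in a ball of radius less than $1$ and, after discarding any redundant centers, is reduced.

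The main obstacle is exactly the cleanness claim above: that one added ball disturbs $\B[Z]$ only in the intended corner. To prove it I would use that $\B[Z]$ is contained in a ball of radius less than $1$, so that a unit ball $\B[w]$ can be positioned to contain all of $\B[Z]$ except a prescribed small neighbourhood of $u$; in cases II and III I would use the freedom in $u_1',u_2'$ together with a continuity argument reducing to case I; I would determine which of the two unit spheres through $C$ to take by a short second-order computation of $E_1,E_2,E_3$ near $u$, verifying that $R$ lies on the correct side of the plane of $C$; and I would check that $\S(w)$ meets no vertex of $\B[Z]$ other than $u,p_1,p_2$ and no generating sphere $\S(z)$ with $z\in Z\setminus\{z_1,z_2,z_3\}$ in the relevant region, so that $F$ has exactly three edges and no spurious vertices or edges are created.
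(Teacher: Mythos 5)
Your overall goal (shave off the corner at $u$ with one extra unit ball) is the same as the paper's, but your specific construction has a fatal rigidity. You insist that the new sphere pass through $u$ as well as $p_1,p_2$, and since a circle of radius less than $1$ lies on exactly two unit spheres, one of which is $\S(z_1)$, your $\S(w)$ is \emph{forced} to be the reflection of $\S(z_1)$ in the plane $H$ of the circle $C$ through $u,p_1,p_2$ --- in case I there is no parameter left to tune, and in cases II, III moving $u_1',u_2'$ does not make the perturbation small either. Consequently your ``key claim'' is not just unproven, it is false in general: for $x\in\S(z_1)$ one has $x\in\B[w]$ exactly when $x$ lies on the $w$-side of $H$, so the part of $\S(z_1)$ (hence of the face $E_1$) that gets removed is an entire cap bounded by the full circle $C$, not merely the triangular corner $R$; and if the circumradius $r$ of $\{u,p_1,p_2\}$ is small (a small corner of a face, which certainly occurs), then the centers $z_1,w$ are at distance $2\sqrt{1-r^2}\approx 2$, so $\B[z_1]\cap\B[w]$ is a lens of diameter about $2r$ and $\B[Z\cup\{w\}]\subseteq\B[z_1]\cap\B[w]$ collapses to a small neighbourhood of the corner --- the edge-graph is destroyed rather than augmented by one edge. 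Your proposed remedy (``a unit ball can be positioned to contain all of $\B[Z]$ except a prescribed small neighbourhood of $u$'') is unavailable precisely because the ball is pinned by the three-point condition, and ``reducing II and III to case I by continuity'' goes the wrong way, since case I is the rigid case.

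The paper's construction avoids exactly this trap by \emph{not} requiring the new sphere to pass through $u$: it takes the supporting sphere of the face containing $a_1\cup a_2$ and rotates it about the line through the two points $p_1,p_2$ only (through $u_1,u_2$ in case I, through $u_2$ and $u_1'$ in case II, through $u_1',u_2'$ in case III), by a sufficiently small angle, chosen so that $u$ lies outside the new ball. Then $u$ and the sub-arcs of $a_1,a_2$ at $u$ are cut off, and a new vertex appears on $a_3$ near $u$ carrying the same three adjacencies, together with the new edge between $p_1$ and $p_2$; since the lemma is a statement about the abstract edge-graph, replacing $u$ by this nearby vertex is harmless, and the smallness of the rotation angle is exactly the control that guarantees nothing else in the face structure changes. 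If you want to keep your framework, the fix is to drop the constraint $u\in\S(w)$ and work with the one-parameter family of unit spheres through $p_1,p_2$ close to $\S(z_1)$ (equivalently, the paper's small rotation), accepting that the combinatorial role of $u$ is taken over by a new vertex on $a_3$.
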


In summary, proving Lemmas~\ref{lem:edgegraphofnewballpolyhedron} and \ref{lem:addingextraballs}, we prove Theorem~\ref{thm:maingeometry}, which in turn yields Theorem~\ref{thm:mainresult}.

\subsection{Proof of Lemma~\ref{lem:edgegraphofnewballpolyhedron}}
In this section, we use the notation of Lemma~\ref{lem:edgegraphofnewballpolyhedron} and Figure~\ref{fig:fig4}.

The following claim is obvious.
\begin{claim}\label{claim:interiorpointsoftriangularface} For any $i\in\{1, 2, 3\}$, $\Lambda\setminus e_i$ is contained in the interior of $\B[x_i]$.
\end{claim}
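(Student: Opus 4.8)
\textbf{Proof plan for Claim~\ref{claim:interiorpointsoftriangularface}.}

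The plan is to exploit the fact that $X$ is a reduced set of centers together with the local geometry of the triangular face $\Lambda$. First I would note that, since $X$ is reduced, none of the generating balls is redundant; in particular $\B[x_i]$ contributes the face $F_i$, which shares the edge $e_i$ with $\Lambda$. The edge $e_i$ is, by definition, a circular arc lying on the intersection $\S(x_\Lambda)\cap\S(x_i)$, so every point of $e_i$ lies on the boundary sphere $\S(x_i)$, i.e.\ is at distance exactly $1$ from $x_i$. Thus $e_i\subseteq\bd\B[x_i]$, and since $\Lambda\subseteq\B[x_i]$ (as $\Lambda$ is a face of $\BX\subseteq\B[x_i]$), the remaining points $\Lambda\setminus e_i$ are candidates to be forced into the open ball $\inter\B[x_i]$.

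The key step is to rule out that any point $p\in\Lambda\setminus e_i$ lies on $\S(x_i)$. Suppose $p\in\Lambda$ with $d(x_i,p)=1$. Both $\S(x_\Lambda)$ and $\S(x_i)$ are unit spheres passing through $p$; two distinct unit spheres meet in a circle, and $\S(x_\Lambda)\cap\S(x_i)$ is exactly the circle carrying the arc $e_i$ (here one uses that $x_\Lambda\neq x_i$, which holds because $X$ is reduced and the two balls cut out different faces). Hence $p$ lies on that circle. But $p\in\Lambda$ and $\Lambda$ is the face on $\S(x_\Lambda)$, which is a closed region of $\S(x_\Lambda)$ bounded by the edges $e_1,e_2,e_3$; the portion of the circle $\S(x_\Lambda)\cap\S(x_i)$ lying in $\Lambda$ is precisely the boundary arc $e_i$. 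Therefore $p\in e_i$, contradicting $p\in\Lambda\setminus e_i$. Combined with $\Lambda\setminus e_i\subseteq\B[x_i]$, this gives $d(x_i,p)<1$ for all such $p$, i.e.\ $\Lambda\setminus e_i\subseteq\inter\B[x_i]$, as claimed. One should also observe that $\Lambda\setminus e_i$ is nonempty and that $e_i$ is a genuine (non-degenerate) arc strictly inside the boundary of $\Lambda$, which follows from $\BX$ being a standard ball polyhedron whose face $\Lambda$ has three distinct edges.

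I expect the only mildly delicate point to be the precise identification ``the part of the circle $\S(x_\Lambda)\cap\S(x_i)$ inside $\Lambda$ equals $e_i$,'' which rests on the definition of edges as \emph{connected components} of the intersection of two generating spheres with $\BX$ together with the standardness of $\BX$ (so two faces meet in exactly one edge); everything else is elementary spherical geometry. Since the claim is asserted to be obvious, a one- or two-line version of the above suffices in the paper.
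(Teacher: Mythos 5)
Your argument is correct: the paper itself gives no proof (it declares the claim obvious), and your reasoning is the natural fleshing-out of the intended argument --- from $\Lambda\subseteq\BX\subseteq\B[x_i]$, any point of $\Lambda$ on $\S(x_i)$ lies in $\Lambda\cap F_i=\S(x_\Lambda)\cap\S(x_i)\cap\BX$, which by standardness (two faces meet in at most one edge) and the definition of edges as connected components equals $e_i$, forcing $\Lambda\setminus e_i$ into $\inter\B[x_i]$. You also correctly identify the one genuinely non-trivial ingredient, namely that standardness is what rules out a second arc of the circle $\S(x_\Lambda)\cap\S(x_i)$ meeting $\Lambda$.
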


The following claim is the key of our proof. It states that the ``new part'' of the boundary of the new ball polyhedron $\mathbf{B}[X\setminus\{x_\Lambda\}]$ belongs to the union of $\mathbb{S}(x_1)$, $\mathbb{S}(x_2)$ and $\mathbb{S}(x_3)$.

\begin{claim}\label{claim:threespheres} $\bd(\BXX)\setminus \bd(\BX)\subseteq \mathbb{S}(x_1)\cup \mathbb{S}(x_2)\cup\mathbb{S}(x_3)$, where $X^\prime=X\setminus\{x_\Lambda\}$.
\end{claim}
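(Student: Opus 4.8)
The plan is to fix an arbitrary $p\in\bd(\BXX)\setminus\bd(\BX)$ and to show that $p$ lies on one of $\S(x_1),\S(x_2),\S(x_3)$. Throughout, write $P=\BX$ and $P'=\BXX$ (both are convex ball polyhedra) and $X'=X\setminus\{x_\Lambda\}$, so that $P=P'\cap\B[x_\Lambda]$ and $P\subseteq P'$. I would first record two easy facts. Since $P\subseteq P'$ and the interior operator is monotone, $\inter P\subseteq\inter P'$; hence if $p\in\B[x_\Lambda]$ then $p\in P'\cap\B[x_\Lambda]=P$, and as $p\notin\bd P$ this would force $p\in\inter P\subseteq\inter P'$, contradicting $p\in\bd P'$. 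So $p\notin\B[x_\Lambda]$ (in particular $p\notin P$). Next, because $p\in\bd P'$ there is a center $x\in X'$ with $p\in\S(x)$: otherwise $p$ would be interior to each $\B[z]$, $z\in X'$, hence have a neighbourhood contained in $\bigcap_{z\in X'}\inter\B[z]\subseteq P'$, i.e. $p\in\inter P'$. It therefore suffices to prove $x\in\{x_1,x_2,x_3\}$, so I assume $x\in Y:=X\setminus\{x_\Lambda,x_1,x_2,x_3\}$ and aim for a contradiction.

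Second, I would extract the local structure around $\Lambda$ from the fact that $P$ is a \emph{standard} ball polyhedron. For $y\in Y$ put $F_y=\S(y)\cap P$, a genuine two-dimensional face of $P$ (since $X$ is reduced). By the lattice property, $F_y\cap\Lambda$ is empty, one vertex, or one edge; it cannot be an edge, since the only edges of $\Lambda$ are $e_1,e_2,e_3$ and $e_i$ is incident to exactly the two faces $\Lambda$ and $F_i$. Hence $F_y\cap\Lambda\subseteq\{v_i\}$ for some index $i=i(y)$. Using $\Lambda=P\cap\S(x_\Lambda)$ together with the trivial identity $P'\cap\S(x_\Lambda)=P'\cap\B[x_\Lambda]\cap\S(x_\Lambda)=\Lambda$, this gives $\Lambda\cap\S(y)\subseteq F_y\cap\Lambda\subseteq\{v_i\}$ and $F_y\cap\S(x_\Lambda)\subseteq F_y\cap\Lambda\subseteq\{v_i\}$; in particular $F_x$ is two-dimensional and $F_x\setminus\{v_i\}\subseteq\inter\B[x_\Lambda]$.

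The geometric heart is that faces ``cannot grow'' when $\B[x_\Lambda]$ is deleted. For distinct centers $a,b$ one computes $\S(a)\cap\B[b]=\{z\in\S(a):\langle z-a,b-a\rangle\ge|a-b|^2/2\}$, a spherical cap of $\S(a)$ of angular radius $\arccos(|a-b|/2)$; as all centers lie in a ball of radius less than $1$ we have $0<|a-b|<2$, so this radius is strictly below $\pi/2$, i.e. each such cap is geodesically convex and contained in an open hemisphere of $\S(a)$. Consequently $\S(x)\cap P'=\bigcap_{z\in X'\setminus\{x\}}\bigl(\S(x)\cap\B[z]\bigr)$ is a nonempty (it contains $p$) intersection of geodesically convex subsets of one open hemisphere, hence geodesically convex — in particular connected, and two-dimensional since it contains $F_x$. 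Now $\S(x)\cap P'$ meets $\inter\B[x_\Lambda]$ (inside $F_x\setminus\{v_i\}$) and also its complement (at $p$), so by connectedness it meets $\S(x_\Lambda)$ at some point $r$; then $r\in P'\cap\S(x)\cap\S(x_\Lambda)=\Lambda\cap\S(x)\subseteq\{v_i\}$. If $\Lambda\cap\S(x)=\emptyset$ the existence of $r$ is already a contradiction. Otherwise $r=v_i$ is the \emph{only} point of $\S(x)\cap P'$ on $\S(x_\Lambda)$; but $v_i$ lies on two of $\S(x_1),\S(x_2),\S(x_3)$ (the spheres carrying the two edges of $\Lambda$ at $v_i$) as well as on $\S(x)$, hence is a relative boundary point of the two-dimensional geodesically convex set $\S(x)\cap P'$, and removing one relative boundary point keeps such a set connected — yet $\bigl(\S(x)\cap P'\bigr)\setminus\{v_i\}$ is the disjoint union of the nonempty relatively open sets $\S(x)\cap P'\cap\inter\B[x_\Lambda]$ and $\bigl(\S(x)\cap P'\bigr)\setminus\B[x_\Lambda]$, a contradiction. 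Therefore $x\in\{x_1,x_2,x_3\}$, i.e. $p\in\S(x_1)\cup\S(x_2)\cup\S(x_3)$.

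I expect the last step to be the delicate one: a vertex $v_i$ of $\Lambda$ need not have degree three in $P$, so a sphere $\S(x)$ with $x\in Y$ may in fact pass through it, which is precisely why the argument must run through the connectedness of $\S(x)\cap P'$ and the effect of deleting a cut point, rather than through the (generally false) assertion $\Lambda\subseteq\inter\B[y]$. The supporting fact that $\S(x)\cap P'$ is connected — indeed geodesically convex — is what makes the ``faces cannot grow'' mechanism work, and it is exactly here that the hypothesis that all centers lie in a ball of radius less than $1$ enters.
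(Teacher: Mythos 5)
Your proof is correct, and it follows the same skeleton as the paper's: both fix the generating sphere $\mathbb{S}(x_q)$ through the new boundary point, compare the old face $F=\mathbb{S}(x_q)\cap\mathbf{B}[X]$ with the new face $F^\prime=\mathbb{S}(x_q)\cap\mathbf{B}[X^\prime]$, use $F^\prime\cap\mathbb{S}(x_\Lambda)=F\cap\Lambda$, and invoke standardness of $\mathbf{B}[X]$ to control that intersection. The difference lies in how the decisive step is executed. The paper argues directly and rather tersely: it asserts that $F^\prime\cap\mathbb{S}(x_\Lambda)$ is a non-degenerate circular arc separating $q$ from $F$ inside $F^\prime$, so $F\cap\Lambda$ is an edge and hence $x_q\in\{x_1,x_2,x_3\}$. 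You argue by contradiction: assuming $x_q\notin\{x_1,x_2,x_3\}$, standardness bounds $F\cap\Lambda$ by a single vertex, and you contradict the connectedness of $F^\prime$, which you establish by noting that $F^\prime$ is an intersection of spherical caps of angular radius less than $\pi/2$ (here the hypothesis that the centers lie in a ball of radius less than $1$ enters explicitly), hence geodesically convex, together with the fact that deleting one point cannot disconnect a two-dimensional convex set. In effect you supply a rigorous justification for precisely the step the paper states without detail (why the arc must be non-degenerate and separating), at the cost of a longer argument; the small facts you use without proof (a face of a ball polyhedron with reduced centers is two-dimensional; an edge lies on exactly two generating spheres, so $F_y\cap\Lambda$ cannot be an edge for $y\notin\{x_\Lambda,x_1,x_2,x_3\}$) are standard and easily checked.
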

\begin{proof} Consider a point $q \in \bd(\BXX)\setminus \bd(\BX)$. Then there exists a generating sphere $\mathbb{S}(x_q)$ of $\mathbf{B}[X^\prime]$ such that $q\in \mathbb{S}(x_q)$ and $x_q\in X^\prime$, implying that $\mathbb{S}(x_q)$ is a generating sphere of $\BX$ as well. Let $F=\mathbb{S}(x_q)\cap \BX$ and $F^\prime=\mathbb{S}(x_q)\cap \BXX$. Then $F=F^\prime\cap \mathbf{B}[x_{\Lambda}]$, $F\subseteq F^\prime$, $q\in F^\prime$, and $q\notin F$. This yields that $F^\prime\cap \mathbb{S}(x_{\Lambda})$ is a non-degenerate circular arc in $F^\prime$ that separates $q$ from $F$. Thus $F$ intersects $\Lambda$ in a non-degenerate circular arc. That only happens if $F$ intersects $\Lambda$ in an edge of $\mathbf{B}[X]$, and hence, $x_q= x_1, x_2$, or $x_3$.
\end{proof}

\noshow{Next, we discuss the most important part of the proof, that is that the edge-graph of the new ball polyhedron contains $Y_0$ as an induced subgraph.

We will need some auxiliary definitions, we follow \cite{BLNP07}.

Let $a, b\in \mathbb{R}^3$. If $d(a,b)<2$, then the \emph{closed spindle} of $a$ and $b$, denoted by $[a,b]_s$ is the union of $[a,b]$ and the arcs of circles of radius at least one that have endpoints $a$ and $b$ and that are shorter than a semicircle. If $d(a,b)=2$, then $[a,b]_s=\B^n[\frac{a+b}{2}, 1]$. If $d(a,b)>2$, then $[a,b]_s=\mathbb{R}^3$.

A set $A\subset \mathbb{R}^3$ is \emph{spindle convex} if $[a,b]_s\subset A$ for any points $a$ and $b$ in $A$. Let $X$ be a set in $\mathbb{R}^3$, then the \emph{spindle convex hull} of $X$ in $\mathbb{R}^3$ is conv$_s\ X=\bigcap \{A\subset \mathbb{R}^3\st X\subset A$ and $A$ is spindle convex in $\mathbb{R}^3\}$.

We leave the proof of the following Claim to the reader.
\begin{claim}\label{claim:threeballaandplane}
Let $\B_1, \B_2$ and $\B_3$ be closed unit balls in $\mathbb{R}^3$ such that $\B_1\cap \B_2\cap \B_3$ is a ball polyhedron with three faces. If $\mathcal{H}$ is the plane containing the centers of $\B_1, \B_2$ and $\B_3$, then $\B_1\cap \B_2\cap \mathcal{H}\not\subseteq \B_3\cap \mathcal{H}$.
\end{claim}

The \emph{circumcenter} of three non-collinear points on the plane is the center of the circle passing through the three points. A \emph{disk-polygon} is a convex set in Euclidean 2-space with nonempty interior obtained as the intersection of finitely many unit disks.
\begin{claim}\label{claim:diskpolygon} Let $D_1, D_2$ and $D_3$ be unit disks in $\mathbb{R}^2$ and $p$ be the circumcenter of their centers. Assume that $D_1\cap D_2\cap D_3$ is a disk-polygon with three edges. Then $p\in \inter(D_1\cap D_2\cap D_3)$.
\end{claim}
\begin{proof} Let $C_1, C_2$ and $C_3$ denote the boundary circles of $D_1, D_2$ and $D_3$ respectively.  We denote the line segment connecting the two points of intersection of $C_i$ and $C_j$ by $\ell_{ij}$. Let $ABC$ and $abc$ be the triangles such that $A$, $B$ and $C$ are the centers of $D_1$, $D_2$ and $D_3$ respectively, and $a$, $b$ and $c$ are the vertices of the disk-polygon, see Figure~\ref{fig:f8}. We will prove more than the claim, we will show that $p$ is in the triangle $abc$.

\begin{figure}[ht]
    \centering
 \includegraphics[width=7cm,height=6cm]{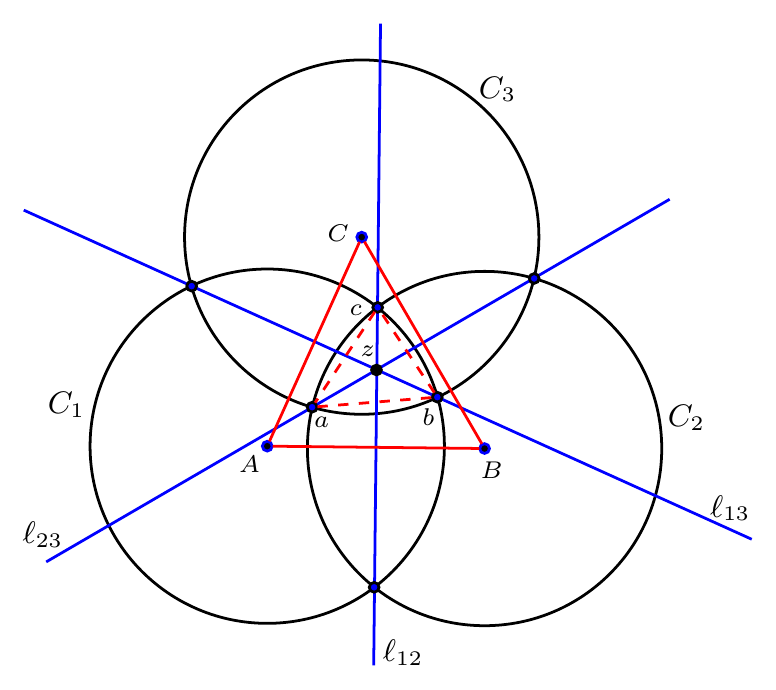}
    \caption{}
    \label{fig:f8}
\end{figure}

Clearly, $\ell_{23}$ and $\ell_{13}$ pass through the points $a$ and $b$ respectively and the intersection point of $\ell_{12}$ and $\ell_{13}$, say $z$, will be inside the triangle $abc$. On the other hand, $z$ is the circumcenter of the triangle $ABC$.
\end{proof}
}

The following Claim is obvious.
\begin{claim}\label{claim:threeballs}
Let $\B_1, \B_2$ and $\B_3$ be closed unit balls in $\mathbb{R}^3$ such that $\B_1\cap \B_2\cap \B_3$ is a ball polyhedron with three faces. Then $\B_1\cap \B_2\cap \B_3$ is a ball polyedron with two vertices connected by three edges.
\end{claim}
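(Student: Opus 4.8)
The plan is to read off the combinatorics of $P:=\B_1\cap\B_2\cap\B_3$ by a counting argument — Euler's formula applied to $\bd P$ — combined with the elementary fact that three pairwise distinct unit spheres have at most two common points.

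I would first set up the topology. Since the three centers lie in a ball of radius less than $1$, the convex body $P$ has nonempty interior, so $\bd P$ is homeomorphic to the $2$-sphere; and each face $F_i=\bd\B_i\cap P$ is the intersection, inside $\bd\B_i$, of the two spherical caps $\bd\B_i\cap\B_j$ and $\bd\B_i\cap\B_k$ (with $\{i,j,k\}=\{1,2,3\}$), each of which is strictly smaller than a hemisphere by the smallness hypothesis. Being an intersection of two geodesically convex caps, $F_i$ is a topological disk; likewise each edge $\bd\B_i\cap\bd\B_j\cap P$ is a single circular arc. Hence the three faces, the edges and the vertices form a CW-decomposition of $\bd P$, so $V-E+3=2$, that is $E=V+1$, where $V$ and $E$ denote the numbers of vertices and edges of $P$.

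Next I would bound $V$. Each vertex of $P$ lies on at least three generating spheres, hence on all of $\bd\B_1,\bd\B_2,\bd\B_3$. The distinct unit spheres $\bd\B_1$ and $\bd\B_2$ meet in a circle of radius less than $1$, which therefore lies on no unit sphere other than $\bd\B_1$ and $\bd\B_2$; thus $\bd\B_3$ meets this circle in at most two points, so $\bd\B_1\cap\bd\B_2\cap\bd\B_3$ has at most two points and $V\le 2$. Finally, each edge of $P$ is a non-degenerate arc whose two endpoints are distinct vertices (an endpoint lies on the two spheres carrying the edge and on a third generating sphere where the arc terminates). Hence $V=0$ is impossible, for it would give $E=1$, an edge with no endpoints; and $V=1$ is impossible, for it would give $E=2$ while each of the two edges needs two distinct endpoints among a single vertex. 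Therefore $V=2$ and $E=3$; since there are only two vertices, each of the three edges joins them, and as the edges lie on the three distinct circles $\bd\B_i\cap\bd\B_j$ we get exactly the asserted structure.

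The only point that is not completely routine is the claim that $\bd P$, with its three faces, its edges and its vertices, genuinely forms a CW-decomposition of the sphere — i.e.\ that the faces are disks and not annuli, and the edges are honest arcs and not full circles or isolated points — since otherwise neither Euler's formula nor the counting of edge endpoints is available. As indicated above, this is guaranteed by the standing hypothesis that the centers lie in a ball of radius less than $1$: it makes every cap $\bd\B_i\cap\B_j$ smaller than a hemisphere, so the intersection of two such caps on $\bd\B_i$ is a (possibly empty, here nonempty) disk, and the arc $\bd\B_i\cap\bd\B_j\cap\B_k$ is connected. With those local facts in hand, the proof is exactly the counting above.
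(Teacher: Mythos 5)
The paper itself offers no proof of this claim (it is simply declared obvious), so the only question is whether your argument is complete. Your overall strategy is sound: realize $\bd P$ ($P=\B_1\cap\B_2\cap\B_3$) as a cell decomposition with three disk faces, apply Euler's formula to get $E=V+1$, bound $V\le 2$ because the circle $\bd\B_1\cap\bd\B_2$ has radius less than $1$ and hence lies on exactly two unit spheres, and finish by counting edge endpoints. You also correctly identify the crux — that the faces must be disks and the edges honest arcs. The gap is that your justification of the edge statement does not do the job: connectedness of $\bd\B_i\cap\bd\B_j\cap\B_k$ (a circle meets a ball in a closed arc) is perfectly consistent with that ``arc'' being the \emph{whole} circle, and ``each cap is smaller than a hemisphere'' does not exclude this either. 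If some edge were a full circle, or if some pair of faces met in a single point or not at all, both your CW decomposition and your endpoint count collapse. This degeneracy cannot be ruled out by the smallness of the centers alone; it must be ruled out using the hypothesis that all three balls genuinely contribute faces, and that step is missing.

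The repair is short and stays inside your framework. Suppose the circle $C=\bd\B_1\cap\bd\B_2$ were contained in $\B_3$, i.e.\ the corresponding edge were all of $C$. The face $F_1=\bd\B_1\cap P$ is geodesically convex on $\bd\B_1$ (intersection of two caps smaller than hemispheres) and contains $C$; since the spherical convex hull of $C$ in $\bd\B_1$ is the entire small cap $\bd\B_1\cap\B_2$, we get $F_1=\bd\B_1\cap\B_2$, and symmetrically $F_2=\bd\B_2\cap\B_1$. Hence $\bd(\B_1\cap\B_2)\subseteq\B_3$, so $\B_1\cap\B_2\subseteq\B_3$, i.e.\ $\B_3$ is redundant and $P=\B_1\cap\B_2$ meets $\bd\B_3$ only in a subset of the two circles $\bd\B_3\cap\bd\B_1$ and $\bd\B_3\cap\bd\B_2$ — so $P$ does not have three faces, a contradiction. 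The same mechanism excludes the other degeneracies: if $F_1\cap F_2$ were empty or a single point, the relative boundary of $F_1$, a simple closed curve contained in $(F_1\cap F_2)\cup(F_1\cap F_3)$, would have to coincide with the full circle $\bd\B_1\cap\bd\B_3$, returning you to the excluded case. Once every pairwise intersection $\bd\B_i\cap\bd\B_j\cap P$ is known to be a proper closed arc, its two distinct endpoints lie on all three spheres, there are at most two such triple points, and all three arcs join the same two points — at which stage you get $V=2$, $E=3$ directly, and Euler's formula is not even needed.
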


Finally, we are in the position to prove Lemma~\ref{lem:edgegraphofnewballpolyhedron}. By Claim~\ref{claim:threeballs}, the boundary of $\B[x_1]\cap\B[x_2]\cap\B[x_3]$ contains two vertices of degree three say $q$ and $\bar{q}$, three edges, and three faces. We need to prove that the ``new part'' $\mathcal{N}:=\bd(\mathbf{B}[X\setminus\{x_\Lambda\}])\setminus \bd(\BX)$ of the boundary of the ball polyhedron $\mathbf{B}[X\setminus\{x_\Lambda\}]$ contains either $q$ or $\bar{q}$ with part from each of the three edges, and part from each of the three faces (i.e., $K_{1,3}$). It means that when we remove the ball $\B[x_\Lambda]$, the triangular face $\Lambda$ of $G$ will be replaced by $Y_0=K_{1,3}$ in the edge-graph of $\mathbf{B}[X\setminus\{x_\Lambda\}]$, i.e., the edge-graph of $\mathbf{B}[X\setminus\{x_\Lambda\}]$ is derived from $G$ by a simple $\DtoY$ reduction.

Let $\Gamma:=\B[x_1]\cap\B[x_2]\cap\B[x_3]$ and $\gamma:=\bd(\Lambda)=e_1\cup e_2\cup e_3$, see Figure~\ref{fig:fig4}. By Claim~\ref{claim:threespheres}, $\mathcal{N}\subseteq\S(x_1)\cup\S(x_2)\cup\S(x_3)$, this implies that $\mathcal{N}\subseteq \bd(\Gamma)$. Clearly, $\bd(\Gamma)$ has three edges, say $e^\prime_1, e^\prime_2$ and $e^\prime_3$ such that $e^\prime_1\subseteq\S(x_2)\cap\S(x_3)$, $e^\prime_2\subseteq\S(x_1)\cap\S(x_3)$ and $e^\prime_3\subseteq\S(x_1)\cap\S(x_2)$.

By Claim~\ref{claim:interiorpointsoftriangularface}, $v_i\in \inter\left(\B[x_i]\right)$ for all $i=1, 2, 3$. By Claim~\ref{claim:threeballs}, $\S(x_1)\cap\S(x_2)\cap\S(x_3)$ is a set of two points $q$ and $\bar{q}$. Clearly, $\{v_1, v_2, v_3\}\cap\{q,\ \bar{q}\}=\emptyset$.

Since $e^\prime_1\setminus\{q,\ \bar{q}\}\subseteq \inter(\B[x_1])$, $v_1\in e^\prime_1\setminus\{q,\ \bar{q}\}$ and $\{e^\prime
_2, e^\prime_3\}\subseteq\S(x_1)$, this implies that $v_1$ belongs to $e^\prime_1\setminus\{q,\ \bar{q}\}$ and does not belong to $e^\prime_2$ or to $e^\prime_3$. Similarly, $v_i$ belongs to $e^\prime_i\setminus\{q,\ \bar{q}\}$ ($i=2, 3$) only, respectively. Thus, exactly one of $v_1, v_2$ and $v_3$ is contained on each of the three edges of $\Gamma$.

Observe that both $\S(x_\Lambda)\cap \Gamma$ and  $\Lambda$ are 
the intersections of three spherical disks on $\S(x_\Lambda)$, each smaller than a hemi-sphere: $\S(x_\Lambda)\cap \B[x_1]$, $\S(x_\Lambda)\cap \B[x_2]$ and $\S(x_\Lambda)\cap \B[x_3]$. Hence, $\S(x_\Lambda)\cap \Gamma=\Lambda$, and it follows that $\S(x_\Lambda)\cap \bd(\Gamma)=\gamma$.

It follows that $\gamma$ partitions $\bd(\Gamma)$ into two components, $q$ is in one component and $\bar{q}$ is in the other, we may assume that $\bar{q}\in\inter\left(\B[x_\Lambda]\right)$. Claim~\ref{claim:threespheres} yields that $q\in\mathcal{N}$ as required. This completes the proof of Lemma~\ref{lem:edgegraphofnewballpolyhedron}.

\subsection{Proof of Lemma~\ref{lem:addingextraballs}}\;

(I) Let $g_1$, $g_2$ and $g_3$ be the faces of $\B[Z]$ such that $a_1=g_2\cap g_3$, $a_2=g_1\cap g_3$  and $a_3=g_1\cap g_2$ and let $\S(z_1)$, $\S(z_2)$ and $\S(z_3)$ be the spheres supporting these faces, see Figure~\ref{fig:fig6}, left side.

To add an internal edge to $Y_0$, we will add a rotated copy $z_3^\prime$ of $z_3$ to the set $Z$, where the axis of the rotation is the line through $u_1$ and $u_2$, the angle of the rotation is sufficiently small, and $u$ is outside of $\B[z_3^\prime]$. Thus, we obtain a new triangular face $g_3^\prime$ supported by $\S(z_3^\prime)$, see the dashed lines on Figure~\ref{fig:fig6} ($A$), and remove the dotted lines on Figure~\ref{fig:fig6} ($A$).

\begin{figure}[ht]
    \centering
 \includegraphics[width=10cm,height=10cm]{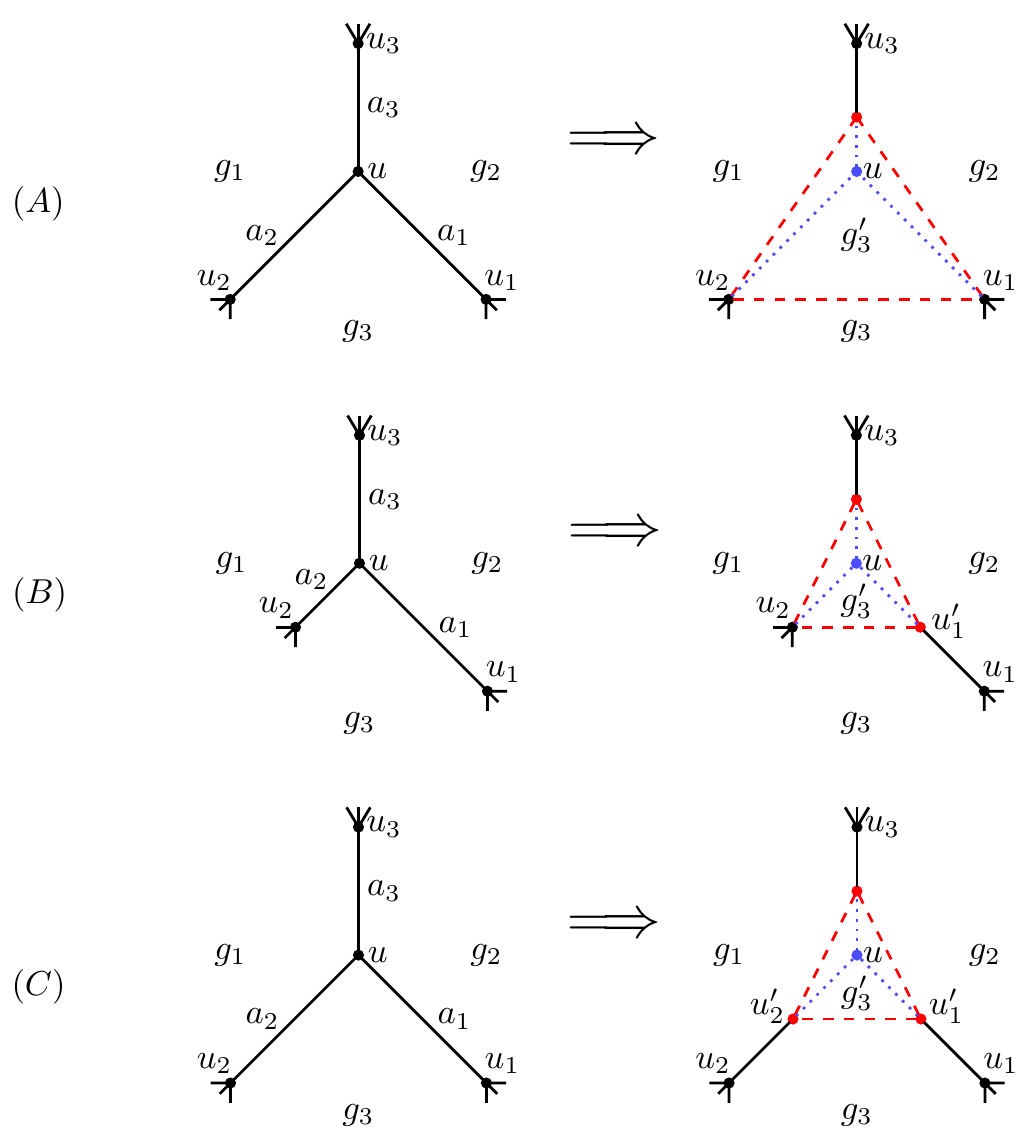}
   \caption{The dotted (blue) vertices and edges are removed and the dashed (red) vertices and edges are introduced in the new graph.}
    \label{fig:fig6}
\end{figure}

(II) To add a new vertex and a new edge to $Y_0$, we use the same method as in (I), but we choose the rotation axis so that it passes through the vertex $u_2$ and intersects the edge $a_1$ at a point, say $u_1^\prime$, distinct from its endpoints, see Figure~\ref{fig:fig6} ($B$).

(III) To add two new vertices and a new edge to $Y_0$, we use again the same method of (I), but this time, we choose the rotation axis so that it intersects the edges $a_1$ and $a_2$ at two points, say $u_1^\prime$ and $u_2^\prime$ respectively,
distinct from their endpoints, see Figure~\ref{fig:fig6} ($C$). This finishes the proof of Lemma~\ref{lem:addingextraballs}.

\section*{Acknowledgements}

SMA would like to thank the Tempus Public Foundation (TPF), Stipendium Hungaricum program, and University of Thi-Qar, Iraq for the support for his PhD scholarship.

ZL was supported by grants K119670 and BME Water Sciences \& Disaster Prevention TKP2020 IE of the National Research, Development and Innovation Fund (NRDI), by the \'UNKP-20-5 New National Excellence Program of the Ministry for Innovation and Technology, and the J\'anos Bolyai Scholarship of the Hungarian Academy of Sciences.

MN was supported by the National Research, Development and Innovation Fund (NRDI) grant K119670, by the \'UNKP-20-5 New National Excellence Program of the Ministry for Innovation and
Technology from the source of the NRDI, as well as the J\'anos Bolyai Scholarship of the Hungarian Academy of Sciences.

\bibliographystyle{amsalpha}
\bibliography{biblio}

\end{document}